\newcommand\reallywidehat[1]{%
\savestack{\tmpbox}{\stretchto{%
  \scaleto{%
    \scalerel*[\widthof{\ensuremath{#1}}]{\kern-.6pt\bigwedge\kern-.6pt}%
    {\rule[-\textheight/2]{1ex}{\textheight}}
  }{\textheight}%
}{0.5ex}}%
\stackon[1pt]{#1}{\tmpbox}%
}
\newcommand*\bigcdot{\mathpalette\bigcdot@{.5}}
\newcommand*\bigcdot@[2]{\mathbin{\vcenter{\hbox{\scalebox{#2}{$\m@th#1\bullet$}}}}}
\newcommand{\calD}{\mathcal{D}}
\newcommand{\calE}{\mathcal{E}}
\newcommand{\calS}{\mathcal{S}}
\newcommand{\mC}{\mathbb{C}}
\newcommand{\mN}{\mathbb{N}}
\newcommand{\mR}{\mathbb{R}}
\newcommand{\mT}{\mathbb{T}}
\newcommand{\mZ}{\mathbb{Z}}
\newcommand{\bbH}{\mathbf{H}}
\newcommand{\bbT}{\mathbf{T}}
\newcommand{\bsigma}{\bm{\sigma}}
\newcommand{\bSigma}{\bm{\Sigma}}
\newtheorem{theorem}{Theorem}[section]
\newtheorem{lemma}[theorem]{Lemma}
\newtheorem{proposition}[theorem]{Proposition}
\theoremstyle{definition}
\theoremstyle{definition}
\newtheorem{definition}[theorem]{Definition}
\theoremstyle{definition}
\newtheorem{notation}[theorem]{Notation}
\theoremstyle{definition}
\begin{document}

\keywords{summation method, Ramanujan summation, 
periodic distributions, Fourier series, Casimir effect}

\subjclass[2010]{Primary 46F12; 
Secondary 42A24, 46F05, 11M06, 11B68, 81T99}

\title[Summation method and an example]{A summation method based on \\
the Fourier series of periodic distributions \\
and an example arising in the Casimir effect}

\author[A. Sasane]{Amol Sasane}
\address{Department of Mathematics \\London School of Economics\\
    Houghton Street\\ London WC2A 2AE\\ United Kingdom}
\email{A.J.Sasane@lse.ac.uk}

\begin{abstract} 
A generalised summation method is considered based on the Fourier 
series of periodic distributions.  It is shown that
$$
e^{it}-2e^{2it}+3e^{3it}-4e^{4it}+-\cdots
=
{\mathrm P\mathrm f} {\displaystyle \frac{e^{it}}{(1+e^{it})^2}} 
+i\pi \displaystyle \sum_{n\in \mZ} \delta'_{(2n+1)\pi},
$$
where 
${\mathrm P\mathrm f} {\displaystyle \frac{e^{it}}{(1+e^{it})^2}}\in \calD'(\mR)$ 
is the $2\pi$-periodic distribution given by 
\begin{eqnarray*}
\left\langle \!{\mathrm P\mathrm f} {\displaystyle \frac{e^{it}}{(1\!+\!e^{it})^2}} ,\varphi \!\right\rangle
\!\!\!\!\!&=&\!\!\!\!\!
 \lim_{\epsilon\searrow 0} 
 \left( \int_{(-\delta,\pi-\epsilon)\cup(\pi+\epsilon,2\pi+\delta)}\! \frac{\varphi(t) e^{it}}{(1\!+\!e^{it})^2}dt \!-\!\frac{\varphi(\pi)}{\tan (\epsilon/2)}\right)
\end{eqnarray*}
for $ \varphi \in \calD(\mR)$ with support $\textrm{supp}(\varphi)\subset (-\delta,2\pi+\delta)$, where $\delta\in (0,\pi)$. 
Applying the generalised summation
method, the sum of the divergent series $1+2+3+\cdots$ 
(arising in a quantum field theory calculation in the Casimir effect) is determined,
and more generally also the sum $1^k+2^k+3^k+\cdots$, for $k\in \mN$, is determined. 
\end{abstract}

\maketitle

\section{Introduction}

\noindent A quantum field theoretic calculation predicts the so-called
Casimir effect, giving the correct\footnote{by now experimentally verified
\cite{Ede}} value of the attractive force between two parallel perfect
conductor plates in vacuum, if one uses the absurd sum
\begin{equation}
\label{eq_16_01_2020_12_32}
\sum_{n=1}^\infty n =-\frac{1}{12}.
\end{equation}
While there exists a generalised summation method (zeta function
regularisation) that allows one to show this, it is somewhat contrived
in the context of the Casimir effect calculation. On the other hand,
Fourier series makes a direct appearance in the quantum field
calculation (see e.g. \cite[\S III]{KNW}), since one uses the quantised momentum space.  We give an
alternative summation method, based on distributional Fourier series,
which gives a derivation of \eqref{eq_16_01_2020_12_32}, and gives a
rigorous mathematical justification of \eqref{eq_16_01_2020_12_32}.

\noindent 
A justification for 
$$
1+2+3+\cdots=-\frac{1}{12},
$$
for example\footnote{There are older justifications of this. For example, in \cite{Son}, 
the determination of $\zeta(-1)$ is attributed to Euler. The related summation of the divergent 
  series $1-2+3-+\cdots$, being assigned the sum $1/4$, can be based on the so-called {\em Euler summation method}. (For a
  modern discussion of the Euler summation method, see for example \cite[\S20,
  p.325-328]{Kor}.)  Euler
  had discovered a functional equation relating the zeta
  function with the Dirichlet eta function (alternating zeta function)
  for integral values, which yields, using the alternating sum
  $1-2+3-+\cdots=1/4$, also that $\zeta(-1)=1+2+3+\cdots=-1/12$
  \cite[Vol. 14, p. 442-443, 594-595; Vol. 15, p. 70-90]{Eul}.   The determination of the zeta function at all negative
  integers, using the Euler summation method, can be found in
  \cite{Son}.} given by Ramanujan in one of his notebooks
\cite[Ch.6, p.135]{Ber}, is as follows:

\bigskip 

\noindent 
If $s:=1+2+3+\cdots$, then formally 
\begin{equation}
\label{ramanujan_manipulation}
\begin{array}{rcl}
s&=&1+2+3+4+5+6+\cdots\\
-4s&=&\phantom{1}-4\phantom{+3+}-8\phantom{+5}-12\cdots \\ \hline 
-3s&=& 1-2+3-4+5-6+-\cdots.
\end{array}
\end{equation}
The power series expansion 
$$
\frac{1}{(1+x)^2}= 1-2x+3x^2-4x^3+-\cdots \textrm{ for }|x|<1
$$
motivates associating the sum of the alternating series
$1-2+3-4+-\cdots$ with
$$
\displaystyle \left. \frac{1}{(1+x)^{2}}\right|_{x=1},
$$ 
and one writes 
\begin{equation}
\label{eq_16_01_2020_18_33}
1-2+3-4+-\cdots=\frac{1}{4}.
\end{equation}
In light of the last equation from \eqref{ramanujan_manipulation},
together with \eqref{eq_16_01_2020_18_33}, we arrive at
\begin{equation}
\label{eq_16_01_2020_18_35}
s=1+2+3+\cdots=-\frac{1}{3}\cdot \frac{1}{4}=-\frac{1}{12}.
\end{equation}
There exist generalised summation techniques which allow one to obtain
the result \eqref{eq_16_01_2020_18_35}, for example the zeta function
regularisation method \cite[p.301]{Zei}. We briefly recall the idea behind the zeta function 
regularisation method in
Appendix B, in order to contrast it with our summation method introduced here. 

In this article, we give an alternative route to
obtaining \eqref{eq_16_01_2020_18_35}, based on the Fourier series of
periodic distributions.  Distributional Fourier series converge in the
sense of distributions whenever the Fourier coefficients $c_n$ grow at
most polynomially, that is, they satisfy for some $M>0$ and $k>0$ that
$$
\textrm{ for all }n\in \mZ,\,\,|c_n|\leq M(1+|n|)^k,
$$
and so one could try making the above manipulations
\eqref{ramanujan_manipulation} with divergent series on a firmer
footing by using the Fourier theory of distributions, where the place
holders of the terms of the divergent series are naturally provided by
the characters $e^{int}$, and the operation in the second step of
\eqref{ramanujan_manipulation} can be carried out by simply doubling
the period, that is, by a homothetic dilation by $2$ of the
distributional Fourier series. The question then arises if one can
give an explicit formula for the distribution corresponding to the
Fourier series (corresponding to the right hand side of the last line in
\eqref{ramanujan_manipulation})
$$
e^{it}-2e^{2it}+3e^{3it}-4e^{4it}+-\cdots.
$$
The aim of this article is mainly to carry out this computation, and
we give an explicit expression for this distribution.  We also give an
operation which can be justifiably thought of as being the operation
of `setting $t=0$' in such a distributional Fourier series. Within the
framework of our generalised summation method, this allows us to
arrive at
$$
\sum_{n=1}^\infty n =-\frac{1}{12}.
$$
We remark that this last series plays a role in the Casimir effect in
quantum field theory \cite[\S6.6]{Zei}. We briefly elaborate on this
link in Appendix A.

The paper is organised as follows. We introduce the summation method
in Section \ref{summation_method}.  In the Sections \ref{section_3}
and \ref{section_4}, we construct a certain $2\pi$-periodic
distribution $S$ on $\mR$, and in Section \ref{sect_FC}, we determine its
Fourier coefficients. In Section \ref{section_5}, it is shown that $S$
has a Fourier series that is summable at $t=0$. Finally, in Sections
\ref{section_6} and \ref{section_7}, we apply our generalised
summation method to determine $1+2+3+\cdots$, and more generally,
$1^k+2^k+3^k+\cdots$ for any $k\in \mN$.

\section{Summation method}
\label{summation_method}

\noindent 
For the background on the Fourier series theory of periodic
distributions, we refer the reader to \cite[Chap.IV]{Sch},
\cite[p.91,101-104]{Zui} and \cite[Chap.33]{Don}. Throughout this
article, unless otherwise indicated, we will use the standard
distribution theory notation from Schwartz \cite{Sch1} or Tr\'eves
\cite{Tre}.  We recall the basic definitions below.

\begin{definition}[Translation operator $\bbT_a$; periodic distributions]$\;$

\noindent 
 Let $a\in \mR$. 
 \begin{itemize}
  \item 
For $f:\mR\rightarrow \mR$, $\bbT_a f:\mR\rightarrow \mR$  is defined by 
 $$
 (\bbT_a f)(t)=f( t-a),\quad t\in \mR.
 $$
 \item 
 For $T\in \calD'(\mR)$,  $\bbT_a T\in \calD'(\mR)$ is defined by 
 $$
 \langle \bbT_a T,\varphi \rangle =  \langle T, \bbT_{-a}\varphi\rangle, 
 \quad \varphi \in \calD(\mR).
 $$
 \item 
 Let $\tau>0$. A distribution $T\in \calD'(\mR)$ is said to be {\em $\tau$-periodic }
 if $\bbT_\tau T=T$. 
 \end{itemize}
\end{definition}

\noindent For computational ease, we will now work with a
$2\pi$-periodic distribution. A $2\pi$-periodic distribution
$T\in \calD'(\mR)$ possesses a Fourier series expansion
$$
T=\lim_{N\rightarrow \infty}\sum_{n=-N}^N c_n(T) e^{int}=:\sum_{n\in \mZ} c_n(T) e^{int},
$$
where the $c_n(T)$ are complex numbers, and the convergence is in
$\calD'(\mR)$. The Fourier coefficients $c_n(T)$ are best expressed in
terms of the distribution
$T_{\scriptscriptstyle \textrm{circle}}\in \calD'(\mT)$, where
$\mT:=\{z\in \mC:|z|=1\}$, obtained by `wrapping $T$ on the
circle'. We make this precise below.

For a $\varphi \in \calD(\mR)$, we first define the $2\pi$-periodic
smooth function $\varphi_{\scriptscriptstyle\textrm{circle}}$
$$
\varphi_{\scriptscriptstyle\textrm{circle}}(e^{it})=\sum_{n\in \mZ} \varphi(t-2\pi n ), \quad t\in \mR.
$$
We note that at each point $t\in \mR$, only finitely many terms in the
series above are nonzero.  Then given a
$T_{\scriptscriptstyle\textrm{circle}}\in \calD'(\mT)$, this defines a
$2\pi$-periodic distribution $T\in \calD'(\mR)$ as follows:
$$
\langle T,\varphi\rangle =\langle T_{\scriptscriptstyle \textrm{circle}}, \varphi_{\scriptscriptstyle \textrm{circle}}\rangle, \quad \varphi \in \calD(\mR).
$$
It can be shown that this map is a bijection between $\calD'(\mT)$ and
the subspace of $\calD'(\mR)$ consisting of all $2\pi$-periodic
distributions; see \cite[p.150-151]{Sch}.  The Fourier coefficients
$c_n(T)$ are given by
$$
c_n(T)=\frac{1}{2\pi} \langle T_{\scriptscriptstyle\textrm{circle}}, e^{-int}\rangle,\quad n\in \mZ.
$$

\noindent We now consider a sequence of pointwise nonnegative test functions
$(\varphi_m)_{m\in \mN}$, with shrinking supports, and unit area,
which provides an approximation to the Dirac delta distribution
$\delta_0$ with support at $0$.

\begin{definition}[Symmetric positive mollifier, approximate identity]$\;$ 

\noindent 
A test function $\varphi\in \calD(\mR)$ is said to be a {\em symmetric positive  mollifier} if 
\begin{itemize}
\item $\varphi(t)=\varphi(-t) $ for all $t\in \mR$, $\phantom{\displaystyle\int_\mR \varphi(t)dt=1 }$
\item $\varphi(t)\geq 0$ for all
  $t\in \mR$,$\phantom{\displaystyle\int_\mR \varphi(t)dt=1 }$
\item $\displaystyle \int_\mR \varphi(t)dt=1$.
\end{itemize}
Define the sequence $(\varphi_m)_{m\in \mN}$ by
$$
\varphi_m(t)=m\varphi(mt),\quad t\in \mR, \;\; m\in \mN.
$$
Then $\varphi_m\rightarrow \delta_0$ as $m\rightarrow \infty$ in
$\calD'(\mR)$, where for $a\in \mR$ the notation $\delta_a$ is used
for the Dirac distribution on $\mR$ with support $\{a\}$. 

\smallskip 

\noindent We
call $(\varphi_m)_{m\in \mN}$ an {\em approximate identity}.
\end{definition}

\goodbreak

\noindent For a $2\pi$-periodic distribution $T$, we have 
$$
T=\sum_{n\in \mZ} c_n(T) e^{int},
$$
and since this is an equality of distributions, we cannot in general
`set $t=0$'. However, we could look at an approximate identity
$(\varphi_m)_{m\in \mN}$, and the limit
$$
\lim_{m\rightarrow \infty} \langle T,\varphi_m\rangle,
$$
if it exists, can be thought of as `setting $t=0$ in $T$'. 
For example, we have the following elementary result.

\begin{proposition}
Let $f\in C(\mR\setminus  \{0\})$ be such that the limits 
$$
f(0+):=\lim_{x\searrow 0} f(x) \textrm{ and } f(0-):=\lim_{x\nearrow 0} f(x) 
$$
exist, and let $T_f\in \calD'(\mR)$ be the 
distribution given by 
$$
\langle T_f,\psi\rangle:=  \int_{\mR} f(t) \psi(t)dt \textrm{ for } 
\psi \in \calD(\mR).
$$
Then for any approximate identity $(\varphi_m)_{m\in \mN}$, we have 
$$
\displaystyle 
\lim_{m\rightarrow \infty} \langle T_f,\varphi_m\rangle=\frac{f(0+)+f(0-)}{2}.
$$
\end{proposition}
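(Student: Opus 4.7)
My plan is to split the pairing $\langle T_f,\varphi_m\rangle = \int_{\mR} f(t)\varphi_m(t)\,dt$ into its contributions from $t>0$ and $t<0$, and show that each piece converges to $\tfrac12 f(0+)$ and $\tfrac12 f(0-)$ respectively. The crucial role of the symmetry of $\varphi$ is that it forces each half-line to carry exactly half the mass: together with $\int_\mR \varphi(t)\,dt=1$ and $\varphi\ge 0$, symmetry gives $\int_0^\infty \varphi(t)\,dt=\int_{-\infty}^0\varphi(t)\,dt=\tfrac12$, and by the change of variables $s=mt$ the same holds for each $\varphi_m$.

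Next I would exploit the fact that $\mathrm{supp}(\varphi)\subset[-R,R]$ for some $R>0$, so $\mathrm{supp}(\varphi_m)\subset[-R/m,R/m]$, which shrinks to $\{0\}$. Fix $\varepsilon>0$. By definition of the one-sided limits, there exists $\eta>0$ with $|f(t)-f(0+)|<\varepsilon$ for $0<t<\eta$ and $|f(t)-f(0-)|<\varepsilon$ for $-\eta<t<0$. Choosing $m$ so large that $R/m<\eta$, I can write
\begin{equation*}
\int_0^\infty f(t)\varphi_m(t)\,dt
= f(0+)\!\int_0^\infty\!\varphi_m(t)\,dt + \int_0^{R/m}\!\!(f(t)-f(0+))\varphi_m(t)\,dt,
\end{equation*}
where the first term equals $\tfrac12 f(0+)$ and the second is bounded in absolute value by $\varepsilon\int_0^{R/m}\varphi_m(t)\,dt=\varepsilon/2$. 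The analogous estimate applies on $(-\infty,0)$, giving $\tfrac12 f(0-)$ in the limit. Adding the two halves and letting $\varepsilon\searrow 0$ yields the claimed value $\tfrac12(f(0+)+f(0-))$.

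A small point worth addressing in the write-up is that $f$ is not defined at $t=0$, but this causes no trouble: the integrand $f\varphi_m$ is defined on $\mR\setminus\{0\}$, is measurable, and the integral depends only on values away from a single point, so $\langle T_f,\varphi_m\rangle$ makes sense as an ordinary Lebesgue integral once one checks local integrability. Local integrability follows at once from the existence of the one-sided limits, which imply $f$ is bounded on a deleted neighborhood of $0$ (and $\varphi_m$ has compact support). I do not foresee any genuine obstacle: the argument is essentially the classical mollifier identity, and the only ingredient beyond continuity at interior points is the equal-mass property enforced by the symmetry of $\varphi$, which cleanly produces the averaged value on the right-hand side.
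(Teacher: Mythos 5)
Your proof is correct and rests on the same decomposition as the paper's: split the pairing at $t=0$ and use the symmetry of $\varphi$ to conclude that each half-line carries exactly half the mass, producing the averaged right-hand side. The one genuine difference is the mechanism used to pass to the limit. The paper substitutes $\tau = mt$, turning $\int_\mR f(t)\varphi_m(t)\,dt$ into $\int_\mR f(\tau/m)\varphi(\tau)\,d\tau$ with the \emph{fixed} test function $\varphi$, and then invokes the Lebesgue dominated convergence theorem on each half-line. You instead keep the original variable, use the shrinking support $\mathrm{supp}(\varphi_m)\subset[-R/m,R/m]$ directly, and run an explicit $\varepsilon$--$\eta$ estimate, bounding $\bigl|\int_0^{R/m}(f(t)-f(0+))\varphi_m(t)\,dt\bigr|\le \varepsilon/2$. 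Your route is more elementary (no appeal to DCT), and you also address a point the paper leaves implicit, namely that the existence of the one-sided limits makes $f$ bounded near $0$ and hence locally integrable there, so the pairing $\langle T_f,\varphi_m\rangle$ is a well-defined Lebesgue integral. Both proofs are fine; the paper's is slightly slicker, yours is slightly more self-contained.
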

\begin{proof} We have 
\begin{eqnarray*}
 \langle T_f,\varphi_m\rangle &=& \int_{\mR}f(t) \varphi_m(t)dt\\
 &=&\int_{-\infty}^\infty f(t) m\varphi(mt)dt \\
 &=&\int_{-\infty}^\infty f\left(\frac{\tau}{m}\right)\varphi(\tau )d\tau\\
 &=& \int_{-\infty}^0 f\left(\frac{\tau}{m}\right)\varphi(\tau )d\tau+\int_0^{\infty} f\left(\frac{\tau}{m}\right)\varphi(\tau )d\tau.
\end{eqnarray*}
By the Lebesgue dominated convergence theorem, it follows that 
\begin{eqnarray*}
\lim_{m\rightarrow \infty} \langle T_f,\varphi_m\rangle
&=&\int_{-\infty}^0 f(0+)\varphi(\tau )d\tau+ \int_0^{\infty} f(0-)\varphi(\tau )d\tau
\\
&=&
f(0+)\int_{-\infty}^0 \varphi(\tau )d\tau+f(0-) \int_0^{\infty} \varphi(\tau )d\tau\\
&=& f(0+)\cdot \frac{1}{2}\int_{\mR}\varphi(\tau )d\tau+f(0-) \cdot \frac{1}{2}\int_{\mR} \varphi(\tau )d\tau
\\
&=&
\frac{f(0+)+f(0-)}{2},
\end{eqnarray*}
where we used the fact that $\varphi$ is a symmetric positive mollifier. 
\end{proof}

\begin{definition} 
\label{def_summable}$\;$

\noindent 
For a $2\pi$-periodic distribution $T$, let
$$
\displaystyle 
\sum_{n\in \mZ} c_n(T)e^{int}=T
$$ 
in $\calD'(\mR)$.  

\medskip 

\noindent If there exists a $\sigma\in \mC$ such that for any
approximate identity $(\varphi_m)_{m\in \mN}$, the limit
$\displaystyle \lim_{m\rightarrow \infty} \langle T, \varphi_m\rangle$
exists, and
 $\displaystyle 
\lim_{m\rightarrow \infty} \langle T, \varphi_m\rangle=\sigma, 
$ 
then we say {\em the series}
$$
\displaystyle \sum_{n\in \mZ} c_n(T)
$$ 
{\em is summable}, or the {\em Fourier series of $T$ is summable at
  $t=0$}, and we define
$$
\sum_{n\in \mZ} c_n(T)=\sigma.
$$
We call a double-sided complex sequence $(C_n)_{n\in \mZ}$ {\em
  summable} if there exists a $2\pi$-periodic distribution $T\in \calD'(\mR)$ such
that
\begin{itemize}
\item for all $n\in \mZ$, $C_n=c_n(T)$, and $\phantom{\displaystyle \int}$
\item the Fourier series of $T$ is summable at $t=0$.$\phantom{\displaystyle \int}$
\end{itemize}
(Since for any $2\pi$-periodic distribution the Fourier coefficients
are unique, this last definition is a well-defined notion.)
\end{definition}

\begin{notation}[$\bsigma$] $\;$

\noindent 
  We denote the set of all $2\pi$-periodic distributions that have a Fourier series that is
   summable at $t=0$, in the sense of Definition~\ref{def_summable}, by
  $\bsigma$.
\end{notation}

\noindent We have the following trivial observation that $\bsigma$ is a
subspace of the space of all $2\pi$-periodic distributions in $\calD'(\mR)$.

\begin{proposition}
\label{prop_linearity}
If $T,S\in \calD'(\mR)$ are $2\pi$-periodic distributions belonging to
$\bsigma$, then for any $\alpha,\beta\in \mC$, also $\alpha T+\beta S$
also belongs to $\bsigma$, and moreover
$$
\sum_{n\in \mZ} c_n(\alpha T +\beta S)
=
\alpha \sum_{n\in \mZ} c_n(T)+ \beta \sum_{n\in \mZ} c_n(S) .
$$
\end{proposition}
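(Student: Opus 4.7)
The statement is a routine verification that the three ingredients entering the definition of $\bsigma$ — being $2\pi$-periodic, possessing the expected Fourier coefficients, and having a summable Fourier series at $t=0$ — each behave linearly. My plan is simply to unpack the definitions and invoke linearity of the distributional pairing, linearity of the translation operator $\bbT_{2\pi}$, and linearity of limits of complex sequences.

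First, I would check that $\alpha T + \beta S$ is $2\pi$-periodic: for any $\varphi \in \calD(\mR)$,
$$
\langle \bbT_{2\pi}(\alpha T + \beta S), \varphi \rangle = \alpha \langle T, \bbT_{-2\pi}\varphi\rangle + \beta \langle S, \bbT_{-2\pi}\varphi\rangle = \alpha \langle T,\varphi\rangle + \beta \langle S,\varphi\rangle,
$$
using $\bbT_{2\pi}T = T$ and $\bbT_{2\pi}S = S$. Next, I would verify from the formula $c_n(R) = \frac{1}{2\pi}\langle R_{\scriptscriptstyle\textrm{circle}}, e^{-int}\rangle$ — together with the fact that the wrapping map $R \mapsto R_{\scriptscriptstyle\textrm{circle}}$ from $2\pi$-periodic distributions on $\mR$ to $\calD'(\mT)$ is linear — that
$$
c_n(\alpha T + \beta S) = \alpha\, c_n(T) + \beta\, c_n(S), \quad n\in \mZ.
$$

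For the summability at $t=0$, fix any approximate identity $(\varphi_m)_{m\in \mN}$. Since $T,S\in \bsigma$, the limits $\sigma_T := \lim_{m\to\infty}\langle T,\varphi_m\rangle$ and $\sigma_S := \lim_{m\to\infty}\langle S,\varphi_m\rangle$ exist and equal $\sum_n c_n(T)$ and $\sum_n c_n(S)$, respectively. By linearity of the pairing,
$$
\langle \alpha T + \beta S, \varphi_m \rangle = \alpha \langle T,\varphi_m\rangle + \beta \langle S,\varphi_m\rangle,
$$
and taking $m\to\infty$ gives the limit $\alpha \sigma_T + \beta \sigma_S$, independently of the chosen approximate identity. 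Hence $\alpha T + \beta S \in \bsigma$ and
$$
\sum_{n\in \mZ} c_n(\alpha T + \beta S) = \alpha \sum_{n\in \mZ} c_n(T) + \beta \sum_{n\in \mZ} c_n(S).
$$

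There is no substantive obstacle here; the proof is a bookkeeping exercise in the linearity of three operations. The only point worth explicit mention is that the value of $\lim_m \langle \alpha T + \beta S, \varphi_m\rangle$ must be verified to be independent of the approximate identity, which is automatic once each of the two summands has this property.
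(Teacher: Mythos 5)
Your proof is correct and takes essentially the same approach as the paper, which simply observes that the claim follows from the linearity of the Fourier coefficient map and from Definition~\ref{def_summable}; you merely spell out the same ingredients (linearity of the pairing, of $c_n$, and of limits) in more detail.
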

\begin{proof} This follows immediately from
  Definition~\ref{def_summable} of summability and the linearity of
  the $n$th Fourier coefficient on the space of all $2\pi$-periodic
  distributions.
\end{proof}

\noindent Let us now show that $\bsigma$ is strictly contained in the
subspace of all $2\pi$-periodic distributions on $\mR$.

\begin{proposition}
\label{prop_counter_example_17_jan_2020_1139}
Let $T$ be the $2\pi$-periodic distribution
$ T=\displaystyle \sum_{n\in \mZ} e^{int}.  $
 
\noindent 
Then $T\not\in \bsigma$.
\end{proposition}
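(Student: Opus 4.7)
The strategy is to identify $T=\sum_{n\in\mZ}e^{int}$ with the Dirac comb and then exhibit a single approximate identity on which $\langle T,\varphi_m\rangle$ fails to converge in $\mC$. Since Definition~\ref{def_summable} requires a common complex limit $\sigma$ for \emph{every} approximate identity, one such ``bad'' choice is enough to rule out membership in $\bsigma$.

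First I would establish the distributional identity
\[
T \;=\; \sum_{n\in\mZ}e^{int} \;=\; 2\pi\sum_{k\in\mZ}\delta_{2\pi k}
\]
in $\calD'(\mR)$. This is verified via the wrapping bijection recalled in Section~\ref{summation_method}: the right-hand side is $2\pi$-periodic and arises from the circle distribution $T_{\scriptscriptstyle\textrm{circle}}=2\pi\delta_{z=1}\in\calD'(\mT)$, whose Fourier coefficients are
\[
c_n \;=\; \tfrac{1}{2\pi}\,\langle 2\pi\delta_{z=1},e^{-int}\rangle \;=\; 1 \qquad (n\in\mZ),
\]
which are exactly the Fourier coefficients of the left-hand side. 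Since the Fourier coefficients of a periodic distribution determine it uniquely, the two distributions coincide.

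With $T$ so identified, I would then fix any symmetric positive mollifier $\varphi$ with $\textrm{supp}(\varphi)\subset(-1,1)$ and $\varphi(0)>0$ (for instance a suitably normalised even bump function). For each $m\in\mN$ we have $\textrm{supp}(\varphi_m)\subset(-1/m,1/m)\subset(-\pi,\pi)$, so only the $k=0$ summand in the Dirac comb contributes, and
\[
\langle T,\varphi_m\rangle \;=\; 2\pi\,\varphi_m(0) \;=\; 2\pi m\,\varphi(0) \;\longrightarrow\; +\infty
\]
as $m\to\infty$. Hence no $\sigma\in\mC$ can satisfy the defining property of summability for this approximate identity, and so $T\notin\bsigma$. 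The only non-routine point is the Poisson-type identity for the Dirac comb, which sits squarely within the framework of periodic distributions already in use; everything else is a one-line divergence argument at $t=0$.
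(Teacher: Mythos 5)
Your proof is correct and is essentially the same as the paper's: both compute $\langle T,\varphi_m\rangle = 2\pi m\,\varphi(0)\to+\infty$ via the Poisson summation formula, which you invoke in its distributional form (the Dirac comb identity $\sum_{n\in\mZ}e^{int}=2\pi\sum_{k\in\mZ}\delta_{2\pi k}$) while the paper applies it directly to the test functions $\varphi_m$. The only cosmetic difference is the support normalisation ($\mathrm{supp}(\varphi)\subset(-1,1)$ in your version versus $(-2\pi,2\pi)$ in the paper's), both of which serve to kill all but the $k=0$ term.
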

\begin{proof}
  Let $\varphi\in \calD(\mR)$ be a symmetric positive mollifier. 
Moreover, suppose that $\textrm{supp}(\varphi)\subset (-2\pi,2\pi)$
and $\varphi(0)>0$.  For $m\in \mN$, with
$\varphi_m:=m\varphi(m\cdot)$, we have
\begin{eqnarray}
\nonumber 
\langle T, \varphi_m\rangle 
&=&\sum_{n=-\infty}^\infty \langle e^{int},\varphi_m\rangle \\
\nonumber 
&=&\sum_{n=-\infty}^\infty \int_{-\infty}^\infty e^{int} \varphi_m(t) dt\\ 
\label{eq_17_jan_2020_11_13}
&=&\sum_{n=-\infty}^\infty \reallywidehat{\varphi_m} (-n),
\end{eqnarray}
where $\reallywidehat{\varphi_m}$ denotes the Fourier transform of
$\varphi_m$. By the Poisson summation formula for
$\varphi_m \in \calD(\mR)\subset \calS(\mR)$ (where $\calS(\mR)$
denotes the Schwartz space of test functions), we have
\begin{equation}
\label{eq_17_jan_2020_11_17}
\frac{1}{2\pi}\sum_{k=-\infty}^\infty \reallywidehat{\varphi_m}(k)
=
\sum_{n=-\infty}^\infty \varphi_m(2\pi n).
\end{equation}
Using \eqref{eq_17_jan_2020_11_17} in \eqref{eq_17_jan_2020_11_13}, we
obtain
\begin{eqnarray*}
\langle T, \varphi_m\rangle 
&=&\sum_{n=-\infty}^\infty \reallywidehat{\varphi_m} (-n)\\
&=&
\sum_{k=-\infty}^\infty \reallywidehat{\varphi_m} (k) \quad\quad (\textrm{substituting }k:=-n)
\\
&=& 2\pi \sum_{n=-\infty}^\infty \varphi_m(2\pi n)\\
&=&2\pi m\sum_{n=-\infty}^\infty \varphi(2\pi mn)\\
&=&2\pi m \varphi(0),\phantom{\sum_{n=-\infty}^\infty }
\end{eqnarray*}
where we have used the fact that $m\in \mN$ and that
$\textrm{supp}(\varphi)\subset (-2\pi,2\pi)$ in order to get the last
equality.  But then as $\varphi(0)>0$, we have
$$
\lim_{m\rightarrow \infty} \langle T,\varphi_m\rangle =2\pi \varphi(0) \cdot 
\lim_{m\rightarrow \infty} m= +\infty,
$$
and so $T\not \in \bsigma$. 
\end{proof}

\noindent In order to `insert zeroes' between the series terms (in our
case between the Fourier series terms), as in the operation \`a la
Ramanujan in \eqref{ramanujan_manipulation} (when the $-4s$ sum is
matched with the $s$ sum in a particular manner), we will consider
$\bbH_2 T$ for a periodic distribution $T$, where $\bbH_2 T$ is
obtained from $T$ by dilation by a factor of $2$, hence `doubling the
period'.

\goodbreak 

\begin{definition}[Homothetic transformation]
Let $\lambda>0$. 
\begin{itemize}
\item For $f:\mR\rightarrow \mR$, $\bbH_\lambda f:\mR\rightarrow \mR$
  is defined by
 $$
 (\bbH_\lambda f)(t)=f(\lambda t),\quad t\in \mR.
 $$
\item For $T\in \calD'(\mR)$, $\bbH_\lambda T\in \calD'(\mR)$ is
  defined by
 $$
 \langle \bbH_\lambda T,\varphi \rangle = \frac{1}{\lambda} \langle T, \bbH_{1/\lambda}\varphi\rangle, 
 \quad \varphi \in \calD(\mR).
 $$
\end{itemize}
\end{definition}

\begin{proposition}
\label{prop_hom}
Let $T$ be a $2\pi$-periodic distribution
$ T=\displaystyle \sum_{n\in \mZ} c_n(T)e^{int}.  $ Then the
$2\pi$-periodic distribution $\bbH_2 T$ has the `lacunary' Fourier
series
$$
\bbH_2 T=\sum_{n\in \mZ} c_n(T) e^{2int}.
$$
Moreover, $\bbH_2 T\in \bsigma$ if and only if $T\in
\bsigma$. Furthermore if $T\in \bsigma$, then we also have that
$$
\sum_{n\in \mZ} c_n(T)=\sum_{n\in \mZ} c_n(\bbH_2 T).
$$
\end{proposition}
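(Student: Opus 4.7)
The plan is to handle the three assertions in order, with the second and third both following from a single transformation of test functions. For the formula $\bbH_2 T=\sum_{n\in\mZ}c_n(T)e^{2int}$, I would appeal to the continuity of $\bbH_2\colon\calD'(\mR)\to\calD'(\mR)$, which follows by duality from the continuity of $\bbH_{1/2}\colon\calD(\mR)\to\calD(\mR)$. Applying $\bbH_2$ to the partial sums of $T=\sum_n c_n(T)e^{int}$ and using the pointwise identity $(\bbH_2 e^{in\cdot})(t)=e^{in(2t)}=e^{2int}$, continuity yields the claimed series in $\calD'(\mR)$. The distribution $\bbH_2 T$ is automatically $\pi$-periodic (dilation by $2$ halves the period) and therefore $2\pi$-periodic, so the series is indeed its $2\pi$-periodic Fourier expansion, with Fourier coefficients $c_{2k}(\bbH_2 T)=c_k(T)$ and $c_{2k+1}(\bbH_2 T)=0$.

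For the summability claims, the key computation is the following transfer identity. Given any symmetric positive mollifier $\varphi$ with approximate identity $\varphi_m(t)=m\varphi(mt)$, define $\psi(t):=\tfrac{1}{2}\varphi(t/2)$. I would check that $\psi$ is again a symmetric positive mollifier (symmetry and nonnegativity are inherited from $\varphi$, and the substitution $s=t/2$ preserves the unit integral), so that $\psi_m(t)=m\psi(mt)=\tfrac{m}{2}\varphi(mt/2)$ is a genuine approximate identity. Unwinding the definition of $\bbH_2$ gives
$$
\langle \bbH_2 T,\varphi_m\rangle=\tfrac{1}{2}\langle T,\bbH_{1/2}\varphi_m\rangle=\langle T,\psi_m\rangle,
$$
since $\tfrac{1}{2}(\bbH_{1/2}\varphi_m)(t)=\tfrac{m}{2}\varphi(mt/2)=\psi_m(t)$.

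From here both directions are immediate. If $T\in\bsigma$ with sum $\sigma$, then by Definition~\ref{def_summable} applied to the mollifier $\psi$ we get $\lim_m\langle T,\psi_m\rangle=\sigma$, so the identity gives $\lim_m\langle \bbH_2 T,\varphi_m\rangle=\sigma$ for every $\varphi$, placing $\bbH_2 T$ in $\bsigma$ with the same sum. For the converse, given $(\varphi_m)$ constructed from $\varphi$, I would introduce $\chi(t):=2\varphi(2t)$, verify it is a symmetric positive mollifier, and observe that in this case the associated $\tilde\chi(t)=\tfrac{1}{2}\chi(t/2)$ collapses back to $\varphi$; applying the key identity to $\chi$ then yields $\langle \bbH_2 T,\chi_m\rangle=\langle T,\varphi_m\rangle$, and the hypothesis on $\bbH_2 T$ transfers summability (with the same value) back to $T$. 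The only genuinely delicate point is bookkeeping: because Definition~\ref{def_summable} demands convergence for \emph{every} approximate identity, one cannot argue with a single favourable sequence but must reverse-engineer the mollifier on both sides, which is precisely what $\psi$ and $\chi$ accomplish.
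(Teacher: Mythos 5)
Your proposal is correct and follows essentially the same route as the paper: continuity of $\bbH_2$ for the lacunary series, and the key identity $\langle \bbH_2 T,\varphi_m\rangle=\langle T,\psi_m\rangle$ with $\psi=\tfrac12\bbH_{1/2}\varphi$, applied in both directions by inverting the mollifier transformation. The only cosmetic difference is which side you start the converse from (the paper fixes the target $(\psi_m)$ and builds $\varphi=2\bbH_2\psi$, whereas you fix $(\varphi_m)$ and build $\chi=2\bbH_2\varphi$), but the underlying bijection on mollifiers is identical, and you correctly flag that the quantifier over \emph{all} approximate identities is what forces this two-sided bookkeeping.
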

\begin{proof} The map
  $\bbH_\lambda :\calD'(\mR)\rightarrow \calD'(\mR)$ is continuous,
  and so it follows by a termwise application of $\bbH_2$ on the
  Fourier series expansion of $T$ that
$$
\bbH_2 T=\sum_{n\in \mZ} c_n(T) e^{2int}.
$$
Now suppose that $(\varphi_m)_{m\in \mN}$ is an approximate
identity. Then we have that $\varphi_m=m\varphi(m\cdot)$, for a symmetric positive mollifier 
$\varphi\in\calD(\mR)$. 
But then $\psi\in \calD(\mR)$, given by
$$
\psi=\displaystyle \frac{1}{2} \bbH_{\frac{1}{2}} \varphi ,
$$
is also a symmetric positive mollifier. 
Hence $(\psi_m)_{m\in \mN}$, where $\psi_m=m\psi(m\cdot)$, is also an
approximate identity.
  
  \medskip

\noindent 
If $ T\in \bsigma$, then
\begin{equation}
\sum_{n\in \mZ}c_n(T)
= \lim_{m\rightarrow \infty}\langle T,\psi_m\rangle 
=\lim_{m\rightarrow \infty}\frac{1}{2} \langle T,\bbH_{\frac{1}{2}} \varphi_m \rangle 
\label{eq_16_jan_2020_19_05}
=\lim_{m\rightarrow \infty} \langle \bbH_2 T,\varphi_m \rangle , 
\end{equation}
and so $\bbH_2 T\in \bsigma$ too, and moreover,
\begin{equation}
\label{eq_16_jan_2020_19_07}
\sum_{n\in \mZ} c_n(T)=\sum_{n\in \mZ} c_n(\bbH_2 T).
\end{equation}
 
\smallskip
 
\noindent
Now suppose that $\bbH_2 T\in \bsigma$. Suppose that $(\psi_m)_{m\in \mN}$ is an approximate
identity. Then $\psi_m=m\psi(m\cdot)$, for a symmetric positive mollifier $\psi\in\calD(\mR)$. 
But then $\varphi\in \calD(\mR)$, given by
$$
\varphi=\displaystyle 2 \bbH_{2} \psi ,
$$
is also a symmetric positive mollifier. 
Hence $(\varphi_m)_{m\in \mN}$, where $\varphi_m=m\varphi(m\cdot)$, is
also an approximate identity.  Also,
$$
\frac{1}{2} \bbH_{\frac{1}{2}} \varphi_m
= \frac{m}{2} \varphi\left(\frac{m}{2}\cdot\right)
=\frac{m}{2}\displaystyle 2 (\bbH_{2} \psi)\left(\frac{m}{2}\cdot\right)
=m\psi\left(2 \frac{m}{2}\cdot\right)=m\psi(m\cdot)=\psi_m.
$$
Thus,
$$
 \sum_{n\in \mZ} c_n(\bbH_2 T)
=\lim_{m\rightarrow \infty} \langle \bbH_2 T,\varphi_m \rangle
=\lim_{m\rightarrow \infty} \frac{1}{2}\langle T,\bbH_{\frac{1}{2}} \varphi_m\rangle 
=\lim_{m\rightarrow \infty}\langle T,\psi_m\rangle.
$$
Consequently, $T \in \bsigma$, and again
\eqref{eq_16_jan_2020_19_07} holds.
\end{proof}

\noindent Let $T\in \calD'(\mR)$ be such that there
exists a $k\neq 1$ such that $T-k \bbH_2 T$ has summable Fourier
coefficients at $t=0$ in the sense of Definition~\ref{def_summable},
that is,
$$
{\small \sum_{n\in \mZ} c_n(T-k \bbH_2 T)}
$$
is summable. Then we have two possible cases:
\begin{itemize}
\item[$\underline{1}^\circ$] $T\in \bsigma$. Then it follows from
  Propositions~\ref{prop_linearity} and \ref{prop_hom} that
 $$
 {\small
 \sum_{n\in \mZ} c_n(T)= \frac{1}{1-k}\sum_{n\in \mZ} c_n(T-k \bbH_2 T).}
 $$
\item[$\underline{2}^\circ$]\label{page_ref_2_circ}
  $T\not\in \bsigma$, that is, $T$ has a Fourier series that is not
  summable at $t=0$ in the sense of
  Definition~\ref{def_summable}. Nevertheless, in light of $1^\circ$
  above, it is now natural to define
 $$
 {\small \sum_{n\in \mZ} c_n(T)= \frac{1}{1-k}\sum_{n\in \mZ} c_n(T-k \bbH_2 T).}
 $$
 In order to make this a formal definition, we need to check
 well-definedness, which is done below.
\end{itemize}

\begin{proposition}
\label{prop_3_feb_2020_1402}
  Suppose that $T\in \calD'(\mR)$ is a $2\pi$-periodic distribution, 
  such that $T\not \in \bsigma$, but there exist
  $\alpha,\beta \in \mC\setminus \{1\}$ such that
  $ T-\alpha \bbH_2 T\in \bsigma$ and $T-\beta \bbH_2 T\in \bsigma.  $
  Then $\alpha=\beta$.
\end{proposition}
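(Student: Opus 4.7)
The plan is to argue by contradiction, exploiting the linearity of summation (Proposition~\ref{prop_linearity}) and the equivalence $\bbH_2 T \in \bsigma \iff T \in \bsigma$ from Proposition~\ref{prop_hom}.

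Suppose, toward a contradiction, that $\alpha \neq \beta$. Both hypotheses assert that $T - \alpha\bbH_2 T$ and $T - \beta\bbH_2 T$ lie in $\bsigma$. Since $\bsigma$ is a subspace of the space of $2\pi$-periodic distributions (Proposition~\ref{prop_linearity}), the difference
$$
(T - \alpha\bbH_2 T) - (T - \beta\bbH_2 T) = (\beta - \alpha)\,\bbH_2 T
$$
also belongs to $\bsigma$. Because $\beta - \alpha \neq 0$, multiplying by the scalar $1/(\beta - \alpha) \in \mC$ and invoking linearity once more shows that $\bbH_2 T \in \bsigma$.

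Now apply Proposition~\ref{prop_hom}, which asserts $\bbH_2 T \in \bsigma \iff T \in \bsigma$. It follows that $T \in \bsigma$, contradicting the standing assumption $T \notin \bsigma$. Hence $\alpha = \beta$.

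The argument is short and really just bookkeeping on top of the two previously established propositions; there is no serious obstacle. The only thing worth double-checking is that the two cited results are applied in the right direction: Proposition~\ref{prop_linearity} is used to conclude that both the difference and scalar multiples of elements of $\bsigma$ lie in $\bsigma$, and Proposition~\ref{prop_hom} is used in the direction $\bbH_2 T \in \bsigma \Rightarrow T \in \bsigma$, which is precisely the half guaranteed by the ``if and only if'' in its statement. The hypothesis $\alpha, \beta \neq 1$ plays no role in this uniqueness argument—it is only needed later, when one actually defines $\sum_n c_n(T)$ via the formula $\frac{1}{1-k}\sum_n c_n(T - k\bbH_2 T)$, where division by $1-k$ requires $k \neq 1$.
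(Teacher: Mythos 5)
Your proof is correct and follows essentially the same contradiction argument as the paper: subtracting the two hypotheses to extract $(\beta-\alpha)\bbH_2 T\in\bsigma$, then concluding $T\in\bsigma$. The only cosmetic difference is the last step: the paper stays at the level of limits along approximate identities and adds $\alpha\lim_m\langle\bbH_2 T,\varphi_m\rangle$ back to $\sigma_\alpha$ to recover $\lim_m\langle T,\varphi_m\rangle$ directly (i.e.\ it uses linearity once more, writing $T=(T-\alpha\bbH_2 T)+\alpha\bbH_2 T$), whereas you invoke the equivalence $\bbH_2 T\in\bsigma\iff T\in\bsigma$ from Proposition~\ref{prop_hom}. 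Both are valid; your observation that $\alpha,\beta\neq 1$ is not used here is also correct.
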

\begin{proof}
  Suppose that $\alpha\neq \beta$. As $T-\alpha \bbH_2 T\in \bsigma$
  and $T-\beta \bbH_2 T\in \bsigma $, we have that for any approximate
  identity $(\varphi_m)_{m\in \mN}$, both the limits
 \begin{eqnarray}
 \label{alpha}
  \sigma_\alpha&:=&\lim_{m\rightarrow \infty} (\langle T,\varphi_m \rangle-\alpha\langle \bbH_2 T,\varphi \rangle) ,\\
  \label{bbb_bets}
  \sigma_\beta&:=&\lim_{m\rightarrow \infty} (\langle T,\varphi_m \rangle-\beta\langle \bbH_2 T,\varphi \rangle) 
 \end{eqnarray}
 exist. But then taking the difference of \eqref{alpha} and \eqref{bbb_bets} gives
 $$
 \sigma_\alpha-\sigma_\beta
 = \lim_{m\rightarrow \infty} (\beta-\alpha)\langle \bbH_2 T,\varphi \rangle,
 $$
 and as $\alpha\neq\beta$, 
 $$
 \lim_{m\rightarrow \infty} \langle \bbH_2 T,\varphi \rangle
 =\frac{1}{\beta-\alpha}(\sigma_\alpha-\sigma_\beta)
 $$
 exists. But then, adding $\alpha$ times
 $\displaystyle \lim_{m\rightarrow \infty} \langle \bbH_2 T,\varphi
 \rangle$ to \eqref{alpha}, gives
 $$
 \lim_{m\rightarrow \infty} \langle T,\varphi_m \rangle
 =\lim_{m\rightarrow \infty} (\langle T,\varphi_m \rangle-\alpha\langle \bbH_2 T,\varphi \rangle)+\alpha 
 \lim_{m\rightarrow \infty} \langle \bbH_2 T,\varphi \rangle
 $$
 exists, that is, $T\in \bsigma$, a contradiction. Hence
 $\alpha=\beta$.
\end{proof}

\noindent Proposition~\ref{prop_3_feb_2020_1402} takes care of the well-definition in
$2^\circ$ on page \pageref{page_ref_2_circ}, and we now have the
following extension of the summation method for Fourier coefficients
given in Definition \ref{def_summable}, from $\bsigma$ to the larger
set $\bSigma$, defined below. 

\begin{notation}[$\bSigma$]$\;$ 

\noindent 
 We set $
\bSigma:=\{T\in \calD'(\mR): \exists k\neq 1 \textrm{ such that } T-k \bbH_2 T \in \bsigma\}$.
\end{notation}

\begin{definition}\label{def_last_summable}
  Let $T\in \calD'(\mR)$ be a $2\pi$-periodic distribution $T$ such
  that there exists a $k\in \mC\setminus\{1\}$ such that
  $T-k \bbH_2 T\in \bsigma$. Then we say that {\em the series}
 $$
 \displaystyle \sum_{n\in \mZ} c_n(T)
 $$ 
 {\em is summable}, or the {\em Fourier series of $T$ is summable at
   $t=0$}, and we define
 $$
 \sum_{n\in \mZ} c_n(T)=\frac{1}{1-k}\sum_{n\in \mZ} c_n(T-k\bbH_2 T).
 $$
 We call a double-sided complex sequence $(C_n)_{n\in \mZ}$ {\em
   summable} if there exists a $2\pi$-periodic distribution $T$ such
 that
\begin{itemize}
 \item for all $n\in \mZ$, $C_n=c_n(T)$, and $\phantom{\displaystyle \sum}$
 \item the Fourier series of $T$ is summable at $t=0$. $\phantom{\displaystyle \sum}$
\end{itemize}
\end{definition}
 
 \section{The distribution ${\small {\mathrm P\mathrm f} {\displaystyle \frac{e^{it}}{(1+e^{it})^2}} }
\in \calD'((0,2 \pi))$}
\label{section_3}

\noindent For $0<\epsilon<\pi$, let
  $\Omega_\epsilon:=(0,\pi-\epsilon )\cup (\pi +\epsilon,2\pi)$.
 
 \medskip 
 
 \noindent 
 For $\varphi \in \calD((0,2\pi))$, we define
 \begin{equation}
  \label{18_jan_2020_16_09}
 \left\langle {\mathrm P\mathrm f} {\displaystyle \frac{e^{it}}{(1+e^{it})^2}} ,\varphi \right\rangle 
 :=
 \lim_{\epsilon\searrow 0} 
 \left( \int_{\Omega_\epsilon} \frac{\varphi(t) e^{it}}{(1+e^{it})^2}dt -\frac{\varphi(\pi)}{\tan (\epsilon/2)}\right). 
 \end{equation}
 In the following result, we will give an alternative expression for the right hand side of \eqref{18_jan_2020_16_09}, 
 and show that ${\mathrm P\mathrm f} {\displaystyle \frac{e^{it}}{(1+e^{it})^2}}$ defines a distribution on the open interval $(0,2\pi)$.

\begin{proposition} 
For any $\varphi \in \calD((0,2\pi))$, we have 
 $$
 \left\langle {\mathrm P\mathrm f} {\displaystyle \frac{e^{it}}{(1+e^{it})^2}} ,\varphi \right\rangle 
 = \int_0^{2\pi} 
 \frac{(t-\pi)^2 e^{it}}{(1+e^{it})^2} 
 \int_0^1 (1-\theta) \varphi''(\pi +\theta (t-\pi)) d\theta dt,
 $$
 and  ${\mathrm P\mathrm f} {\displaystyle \frac{e^{it}}{(1+e^{it})^2}} \in \calD'((0,2\pi))$. 
\end{proposition}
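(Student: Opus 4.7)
The plan is to exploit the identity $1+e^{it}=2\cos(t/2)\,e^{it/2}$, which gives
$$\frac{e^{it}}{(1+e^{it})^2}=\frac{1}{4\cos^2(t/2)},$$
so on $(0,2\pi)$ the only singularity is a double pole at $t=\pi$. First I would use this representation to compute the divergent piece explicitly. Since $\frac{d}{dt}\frac{1}{2}\tan(t/2)=\frac{1}{4\cos^2(t/2)}$, a short calculation gives
$$\int_{\Omega_\epsilon}\frac{e^{it}}{(1+e^{it})^2}\,dt=\frac{1}{2}\bigl[\tan(t/2)\bigr]_0^{\pi-\epsilon}+\frac{1}{2}\bigl[\tan(t/2)\bigr]_{\pi+\epsilon}^{2\pi}=\cot(\epsilon/2)=\frac{1}{\tan(\epsilon/2)}.$$
This is exactly the counterterm subtracted in \eqref{18_jan_2020_16_09}, so the $\varphi(\pi)$ part will cancel.

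Next, I would apply Taylor's theorem with integral remainder about $t=\pi$:
$$\varphi(t)=\varphi(\pi)+\varphi'(\pi)(t-\pi)+(t-\pi)^2\int_0^1(1-\theta)\varphi''(\pi+\theta(t-\pi))\,d\theta=:\varphi(\pi)+\varphi'(\pi)(t-\pi)+R(t).$$
Substituting into \eqref{18_jan_2020_16_09} splits the integral over $\Omega_\epsilon$ into three pieces. The $\varphi(\pi)$-piece cancels the subtracted term by the computation above. The $\varphi'(\pi)$-piece vanishes for \emph{every} $\epsilon>0$ by odd symmetry: the substitution $u=t-\pi$ turns the integrand into $\frac{u}{4\sin^2(u/2)}$, an odd function of $u$, integrated over the symmetric set $(-\pi,-\epsilon)\cup(\epsilon,\pi)$.

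The remaining piece is the remainder integral $\int_{\Omega_\epsilon}\frac{R(t)\,e^{it}}{(1+e^{it})^2}\,dt$. Because $R(t)=O((t-\pi)^2)$ while $\frac{1}{4\cos^2(t/2)}=O((t-\pi)^{-2})$ near $\pi$, the integrand $\frac{(t-\pi)^2}{4\cos^2(t/2)}\int_0^1(1-\theta)\varphi''(\pi+\theta(t-\pi))\,d\theta$ extends continuously to all of $[0,2\pi]$, so the integral is absolutely convergent on $(0,2\pi)$ and dominated convergence lets me pass from $\Omega_\epsilon$ to $(0,2\pi)$ as $\epsilon\searrow 0$. This yields the claimed identity.

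For the distributional statement, linearity of $\varphi\mapsto\langle\mathrm{Pf}\,\tfrac{e^{it}}{(1+e^{it})^2},\varphi\rangle$ is immediate from the integral representation. Continuity is the main technical point: the function $M(t):=\frac{(t-\pi)^2}{4\cos^2(t/2)}$ is continuous on $[0,2\pi]$ (value $1$ at $\pi$), hence bounded, so for any compact $K\subset(0,2\pi)$ and any $\varphi\in\calD((0,2\pi))$ with $\mathrm{supp}(\varphi)\subset K$,
$$\Bigl|\Bigl\langle\mathrm{Pf}\,\tfrac{e^{it}}{(1+e^{it})^2},\varphi\Bigr\rangle\Bigr|\leq 2\pi\|M\|_\infty\cdot\tfrac{1}{2}\|\varphi''\|_\infty,$$
so the functional has order at most two and is continuous on $\calD_K$, i.e.\ defines an element of $\calD'((0,2\pi))$. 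The only step requiring genuine care is the odd-symmetry vanishing of the $\varphi'(\pi)$ piece and the uniform boundedness of $M$ near $\pi$; both are elementary once the $\cos^2(t/2)$ form of the denominator has been made explicit.
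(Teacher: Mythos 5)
Your proof is correct and takes essentially the same route as the paper: Taylor's formula with integral remainder about $t=\pi$, the same three-piece decomposition, cancellation of the $\varphi(\pi)$ term against the counterterm, vanishing of the $\varphi'(\pi)$ term by symmetry, and dominated convergence plus a uniform bound for the remainder and continuity. The one genuine simplification you make is the identity $\frac{e^{it}}{(1+e^{it})^2}=\frac{1}{4\cos^2(t/2)}$, which makes the antiderivative, the odd symmetry of the linear piece, and the boundedness of $(t-\pi)^2/(4\cos^2(t/2))$ all immediate, whereas the paper keeps the complex form and proves the bound via the inequality $|\cos(t/2)|\geq|\pi-t|/\pi$; both are fine but yours is a touch cleaner.
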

\begin{proof} By Taylor's Formula \cite[(e),p.45]{Zui} we have
$$
\varphi(t)= \varphi(\pi)+(t-\pi)\varphi'(\pi) 
           +(t-\pi)^2 \int_0^1 (1-\theta) \varphi''(\pi+\theta(t-\pi)) d\theta.
$$
We have 
\begin{eqnarray*}
 \int_{\Omega_\epsilon}\frac{\varphi(t) e^{it}}{(1+e^{it})^2}dt
 &=&
 \varphi(\pi)\underbrace{\int_{\Omega_\epsilon} \frac{e^{it}}{(1+e^{it})^2}dt}_{A}
 +
 \varphi'(\pi)\underbrace{\int_{\Omega_\epsilon} \frac{(t-\pi)e^{it}}{(1+e^{it})^2}dt}_{B}
 \\
 &&\phantom{a}+
 \underbrace{\int_{\Omega_\epsilon} \frac{(t-\pi)^2e^{it}}{(1+e^{it})^2}\int_0^1(1-\theta)\varphi''(\pi +\theta(t-\pi)) d\theta dt.}_{C}
\end{eqnarray*}
We will treat the three summands $A,B,C$ separately below.  It can be seen that the second summand 
above vanishes, by splitting the integral $B$ on
$\Omega_\epsilon=(0,\pi-\epsilon)\cup (\pi+\epsilon,2\pi)$ into two,
and using the substitution $\tau=2\pi -t$ for the integral on the
domain $(\pi+\epsilon,2\pi)$:
\begin{eqnarray*}
B&=&\int_{\Omega_\epsilon} \frac{(t-\pi)e^{it}}{(1+e^{it})^2}dt
\\
&=&
\int_{0}^{\pi-\epsilon} \frac{(t-\pi) e^{it}}{(1+e^{it})^2}dt
+
\int_{\pi+\epsilon}^{2\pi} \frac{(t-\pi)e^{it}}{(1+e^{it})^2}dt
\\
&=& 
\int_{0}^{\pi-\epsilon} \frac{(t-\pi)e^{it}}{(1+e^{it})^2}dt
+
\int_{\pi-\epsilon}^{0} \frac{(\pi-\tau) e^{-i\tau}}{(1+e^{-i\tau})^2}(-1) d\tau 
\\
&=&
\int_{0}^{\pi-\epsilon} \frac{(t-\pi)e^{it}}{(1+e^{it})^2}dt
-
\int_{0}^{\pi-\epsilon} \frac{(\tau-\pi) e^{-i\tau}}{(e^{i\tau}+1)^2e^{-2i \tau}} d\tau\\
&=& 
\int_{0}^{\pi-\epsilon} \frac{(t-\pi)e^{it}}{(1+e^{it})^2}dt
-
\int_{0}^{\pi-\epsilon} \frac{(\tau-\pi)e^{i\tau}}{(1+e^{i\tau})^2}d\tau\\
&=&0.\phantom{\int_{0}^{\pi} \frac{e^{i\tau}}{(e^{i\tau})^2}d\tau}
\end{eqnarray*}
In order to compute $A$, we note that $\displaystyle \frac{d}{dt}\frac{1}{1+e^{it}}
=-\frac{ie^{it}}{(1+e^{it})^2}$, and so  
\begin{eqnarray*}
 A=\int_{\Omega_\epsilon} \frac{e^{it}}{(1+e^{it})^2}dt
 &=&
 \frac{i}{1+e^{it}}\Big|_0^{\pi-\epsilon}+\frac{i}{1+e^{it}}\Big|_{\pi+\epsilon}^{2\pi}
 \\
 &=& 
 i\Big(\frac{1}{1+e^{i(\pi-\epsilon)}}-\frac{1}{2}+\frac{1}{2}-\frac{1}{1+e^{i(\pi+\epsilon)}}\Big)
 \\
 &=& 
 i \Big(\frac{e^{i\epsilon}}{e^{i\epsilon}-1}-\frac{1}{1-e^{i\epsilon}}\Big)\\
 &=&i\frac{2\cos (\epsilon/2)}{2i \sin (\epsilon/2)} 
 \\
 &=&\frac{1}{\tan(\epsilon/2)}.
\end{eqnarray*}
Finally we consider the integral $C$, and we are interested in taking
the limit of this integral as $\epsilon\searrow 0$.

\goodbreak

\noindent 
For $t\in [0,2\pi]$, $|\cos(t/2)|\geq \displaystyle \frac{|\pi-t|}{\pi}$.
\begin{figure}[H]
     \center 
     \includegraphics[width=5.7 cm]{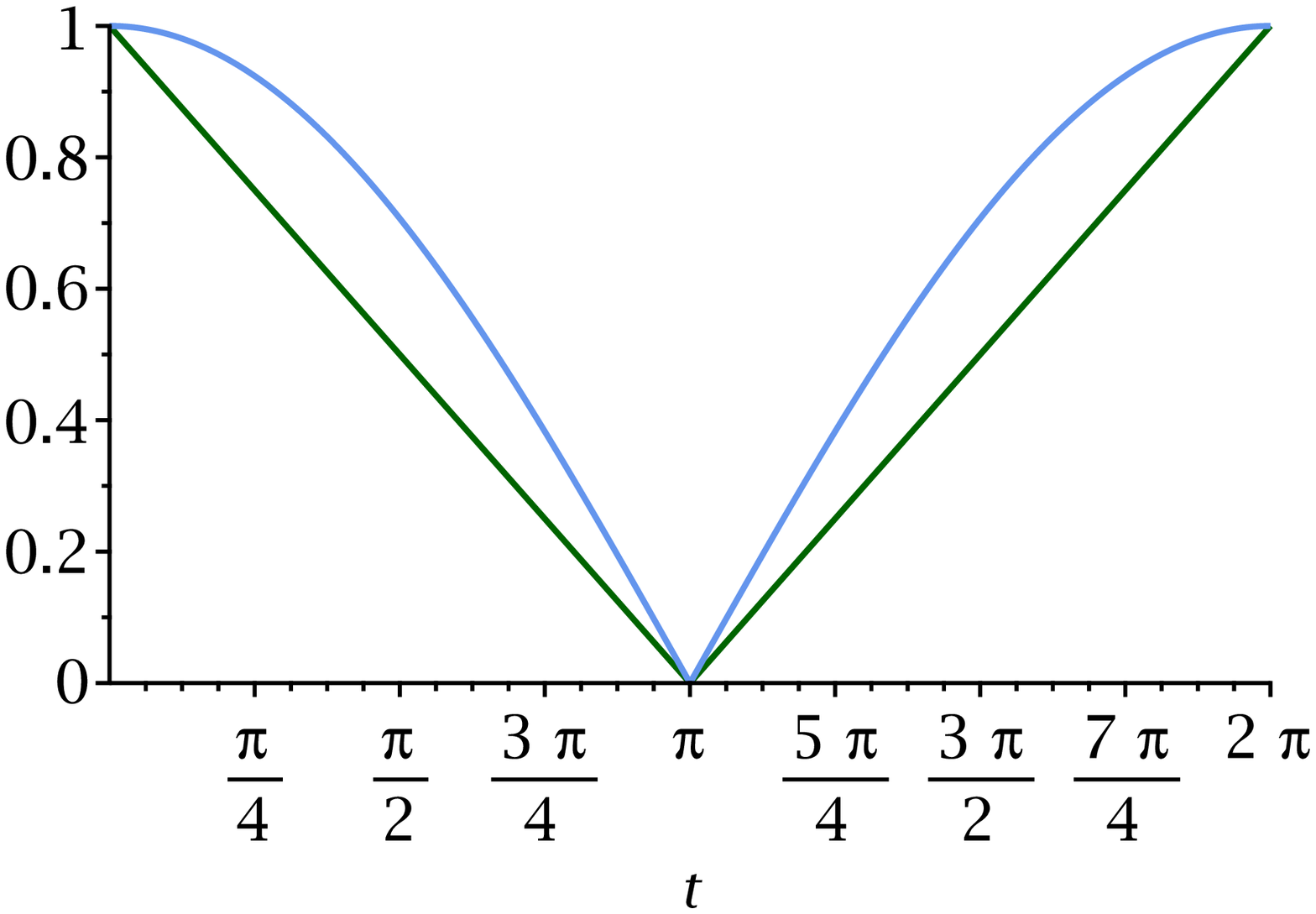}
\end{figure}

\noindent Thus
$$
\displaystyle 
\left| \frac{(t-\pi)^2 e^{it}}{(1+e^{it})^2}\right| 
=
\frac{(t-\pi)^2}{|e^{it/2}+e^{-it/2}|^2} 
=\frac{(t-\pi)^2}{4(\cos (t/2))^2}
\leq \frac{\pi^2}{4}.
$$
So we have 
\begin{eqnarray*}
\lim_{\epsilon\searrow 0} \int_{\Omega_\epsilon} 
&&\!\!\!\!\!\!\!\!\!\!\!\!\!\!\!\frac{(t-\pi)^2e^{it}}{(1+e^{it})^2}
\int_0^1(1-\theta)\varphi''(\pi +\theta(t-\pi)) d\theta dt
\\
&=&
\int_0^{2\pi} 
 \frac{(t-\pi)^2 e^{it}}{(1+e^{it})^2} 
 \int_0^1 (1-\theta) \varphi''(\pi +\theta (t-\pi)) d\theta dt .
\end{eqnarray*}
Hence 
\begin{eqnarray*}
\left\langle {\mathrm P\mathrm f} {\displaystyle \frac{e^{it}}{(1+e^{it})^2}} ,\varphi \right\rangle
&:=& 
\lim_{\epsilon \searrow 0} 
\left( 
\int_{\Omega_\epsilon} \frac{\varphi(t) e^{it}}{(1+e^{it})^2}dt -\frac{\varphi(\pi)}{\tan (\epsilon/2)}\right)
 \\
 &=& \int_0^{2\pi} 
 \frac{(t-\pi)^2 e^{it}}{(1+e^{it})^2} 
 \int_0^1 (1-\theta) \varphi''(\pi +\theta (t-\pi)) d\theta dt .
\end{eqnarray*}
The linearity of
 $$
 \varphi\mapsto \left\langle {\mathrm P\mathrm f} {\displaystyle
     \frac{e^{it}}{(1+e^{it})^2}} ,\varphi \right\rangle
 :\calD((0,\pi))\rightarrow \mC
 $$
is obvious. In order to show continuity, we note that if 
$(\varphi_n)_{n\in \mN}$ is a sequence that converges to $0$ in
$\calD((0,2\pi))$, then in particular, $(\varphi_n'')_{n\in \mN}$
converges to $0$ uniformly on $(0,2\pi)$, and so
\begin{eqnarray*}
\left| 
\left\langle {\mathrm P\mathrm f} {\displaystyle \frac{e^{it}}{(1+e^{it})^2}} ,\varphi_n \right\rangle 
\right|
&=&
\left| 
\int_0^{2\pi} 
 \frac{(t-\pi)^2 e^{it}}{(1+e^{it})^2} 
 \int_0^1 (1-\theta) \varphi_n''(\pi +\theta (t-\pi)) d\theta dt
 \right|
 \\
 &\leq & 2\pi \cdot \frac{\pi^2}{4}\cdot 1 \cdot \|\varphi_n''\|_\infty \stackrel{n\rightarrow \infty}{\longrightarrow} 0.
 \phantom{{\displaystyle \frac{e^{it}}{(e^{it})^2}}}
\end{eqnarray*}
Consequently, ${\mathrm P\mathrm f} {\displaystyle \frac{e^{it}}{(1+e^{it})^2}} 
\in \calD'((0,2\pi))$. 
\end{proof}

\section{$2\pi$-periodic extension}
\label{section_4}

\noindent For $\varphi \in \calD(\mR)$, we define the $2\pi$-periodic
extension of
$$
{\mathrm P\mathrm f} {\displaystyle \frac{e^{it}}{(1+e^{it})^2}} \in \calD'((0,2\pi))
$$ 
to $\mR$, which we will denote 
denoted by the same symbol.
%
%
%
This can be done as follows: We note that as the function
$$
\frac{e^{it}}{(1+e^{it})^2}
$$
is continuous around $0$ and $2\pi$, and there is no problem
extending our old distribution
$$
\displaystyle {\mathrm P\mathrm f} {\displaystyle \frac{e^{it}}{(1+e^{it})^2}} \in \calD'((0,2\pi))
$$ 
to a slightly bigger open set $(0-\delta, 2\pi+\delta)$ for a small
$\delta>0$, so that in fact
$$
\displaystyle {\mathrm P\mathrm f} {\displaystyle \frac{e^{it}}{(1+e^{it})^2}} 
\in \calD'((0-\delta,2\pi+\delta)).
$$ 
Then the patching up of shifts of this distribution by integer
multiples of $2\pi$ in order to define a global distribution on $\mR$
can be done using the following principle of piecewise pasting/`recollement des
morceaux' \cite[Theorem IV, p.27]{Sch1} or \cite[Theorem 1.4.3,
p.16-17]{Kes}.

\begin{proposition}
Let $\{\Omega_i\}_{i\in I}$ be an open cover of $\mR$. Let $T_i\in \calD'(\Omega_i)$ be such that whenever for 
$i\neq j$ we have $\Omega_i\cap \Omega_j\neq \emptyset$, then $T_i|_{\Omega_i\cap \Omega_j}=T_j|_{\Omega_i\cap \Omega_j}$. 
Then there exists a unique distribution $T\in \calD'(\mR)$ such that $T|_{U_i}=T_i$ for all $i\in I$, given by 
$$
\langle T,\varphi\rangle = \sum_{i\in I} \langle T_i ,\varphi \alpha_i\rangle,
$$
where $\{\alpha_i\}_{i\in I}$ is a locally finite partition of unity\footnote{That is, $\alpha_i$, $i\in I$, are $C^\infty$ functions such that 
$0\leq \alpha_i\leq 1$, $E_i:=\textrm{supp}(\alpha_i)\subset \Omega_i$, $\{E_i\}_{i\in I}$ is locally finite (that is, for all $x\in \mR$, there exists 
a neighbourhood of $x$ that intersects only finitely many of the $E_i$, and $\displaystyle\sum\limits_{i\in I}\alpha_i=1$.}.
\end{proposition}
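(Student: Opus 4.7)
The plan is to define $T$ by the stated formula and then verify four things: that the sum makes sense term-by-term and is finite, that $T$ so defined is a distribution (linear and continuous), that $T|_{\Omega_i}=T_i$, and that $T$ is the unique such distribution. The existence of a locally finite partition of unity $\{\alpha_i\}_{i\in I}$ subordinate to $\{\Omega_i\}_{i\in I}$ is a standard fact that I would take as given.

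First I would exploit local finiteness: for any $\varphi\in\calD(\mR)$ with compact support $K$, only finitely many indices $i$ satisfy $E_i\cap K\neq\emptyset$ (cover $K$ by finitely many of the local neighbourhoods provided by local finiteness), so the sum $\sum_i \langle T_i,\varphi\alpha_i\rangle$ collapses to a finite sum. Moreover $\textrm{supp}(\varphi\alpha_i)\subset E_i\subset \Omega_i$, so each $\varphi\alpha_i$ is a legitimate test function in $\calD(\Omega_i)$ and each pairing $\langle T_i,\varphi\alpha_i\rangle$ is defined. Linearity of $\varphi\mapsto \langle T,\varphi\rangle$ is then immediate.

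For continuity, if $\varphi_n\to 0$ in $\calD(\mR)$, the supports lie in a common compact $K$, so a single finite index set $I_K:=\{i:E_i\cap K\neq\emptyset\}$ handles every $n$ simultaneously. For each $i\in I_K$, multiplication by the smooth function $\alpha_i$ sends $\calD(\mR)\to\calD(\Omega_i)$ continuously (supports stay in $E_i$, and all derivatives of $\varphi_n\alpha_i$ tend uniformly to zero by Leibniz's rule), hence $\langle T_i,\varphi_n\alpha_i\rangle\to 0$ by continuity of $T_i$. A finite sum of sequences tending to zero tends to zero, so $T\in\calD'(\mR)$. For the restriction property, fix $\varphi\in\calD(\Omega_j)$. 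Then $\textrm{supp}(\varphi\alpha_i)\subset \Omega_i\cap\Omega_j$, so the compatibility hypothesis $T_i|_{\Omega_i\cap\Omega_j}=T_j|_{\Omega_i\cap\Omega_j}$ lets us replace $T_i$ by $T_j$ in each term, yielding
$$
\langle T,\varphi\rangle \;=\; \sum_{i\in I}\langle T_i,\varphi\alpha_i\rangle \;=\; \sum_{i\in I}\langle T_j,\varphi\alpha_i\rangle \;=\; \Big\langle T_j,\varphi\sum_{i\in I}\alpha_i\Big\rangle \;=\; \langle T_j,\varphi\rangle,
$$
since $\sum_i\alpha_i=1$ and the sum is locally finite (so interchanging summation and the pairing is justified).

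For uniqueness, suppose $\widetilde T\in\calD'(\mR)$ also satisfies $\widetilde T|_{\Omega_i}=T_i$ for every $i$. Given any $\varphi\in\calD(\mR)$, the decomposition $\varphi=\sum_i\varphi\alpha_i$ is a finite sum on $\textrm{supp}(\varphi)$ with each summand in $\calD(\Omega_i)$, so $\langle\widetilde T,\varphi\rangle=\sum_i\langle\widetilde T,\varphi\alpha_i\rangle=\sum_i\langle T_i,\varphi\alpha_i\rangle=\langle T,\varphi\rangle$. I expect the only genuinely delicate step to be the continuity verification, where one must use local finiteness of $\{E_i\}$ to reduce to a fixed finite index set over the bounded sequence $(\varphi_n)$; this is the point at which the hypothesis on the partition of unity is essential. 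A by-product of the uniqueness argument is that the construction does not depend on the choice of partition of unity.
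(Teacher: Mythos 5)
Your argument is correct and complete: well-definedness via local finiteness, continuity via a fixed finite index set on each compact, the restriction property via the compatibility hypothesis and $\sum_i\alpha_i=1$, and uniqueness via the same decomposition. The paper itself supplies no proof---it cites Schwartz (Th\'eorie des Distributions, Theorem~IV, p.~27) and Kesavan (Theorem~1.4.3)---and your partition-of-unity argument is exactly the standard one given in those references, so there is nothing to contrast.
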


\begin{notation}[The distribution $S$] $\;$
\label{19_jan_2020_distr_S}

\smallskip 

\noindent 
We define the distribution $S\in \calD'(\mR)$ by  
$$
S:= {\mathrm P\mathrm f} {\displaystyle \frac{e^{it}}{(1+e^{it})^2}} 
+i\pi \sum_{n\in \mZ} \delta'_{(2n+1)\pi}.
$$
It follows from the construction of $S$ that $S$ is $2\pi$-periodic.
\end{notation}

\section{Fourier coefficients of 
$S={\mathrm P\mathrm f} {\displaystyle \frac{e^{it}}{(1+e^{it})^2}} 
+i\pi \displaystyle \sum_{n\in \mZ} \delta'_{(2n+1)\pi}$}
\label{sect_FC}

\begin{theorem}
The following Fourier series expansion is valid in $\calD'(\mR)$:
\begin{eqnarray*}
S={\mathrm P\mathrm f} {\displaystyle \frac{e^{it}}{(1+e^{it})^2}} 
+i\pi \displaystyle\! \sum_{n\in \mZ} \delta'_{(2n+1)\pi}
 \!&=&\!e^{it}-2e^{2it}+3e^{3it}-4e^{4it}+-\cdots\\
 \!&=&\!\sum_{n=1}^\infty (-1)^{n-1} n e^{nit}.
\end{eqnarray*}
\end{theorem}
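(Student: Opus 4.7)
My strategy is to compute the Fourier coefficients $c_n(S)$ for every $n\in\mZ$ directly and verify that $c_n(S)=(-1)^{n-1}n$ for $n\geq 1$ and $c_n(S)=0$ otherwise. Since these coefficients grow at most linearly, the series converges in $\calD'(\mR)$, and by uniqueness of the Fourier expansion of a $2\pi$-periodic distribution the claimed identity will follow.

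Write $S=S_1+S_2$ with $S_1={\mathrm P\mathrm f}\frac{e^{it}}{(1+e^{it})^2}$ and $S_2=i\pi\sum_{k\in\mZ}\delta'_{(2k+1)\pi}$. The contribution from $S_2$ is immediate: after wrapping to $\mT$ only $\delta'_\pi$ remains in one period, and $\langle\delta'_\pi,e^{-int}\rangle=-(e^{-int})'|_{t=\pi}=in(-1)^n$, so
$$c_n(S_2)=\frac{1}{2\pi}\cdot i\pi\cdot in(-1)^n=\frac{n(-1)^{n-1}}{2}.$$

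For $S_1$ I will apply the integral representation of Section~\ref{section_3} with $\varphi=e^{-int}$ (using a smooth cutoff / partition-of-unity around one period so the formula extends to this smooth periodic function). Since $\varphi(\pi)=(-1)^n$ and $\varphi'(\pi)=-in(-1)^n$, running Taylor's identity from Section~\ref{section_3} in the opposite direction rewrites $\langle S_1,e^{-int}\rangle$ as
$$\int_0^{2\pi}\frac{e^{it}\bigl[e^{-int}-(-1)^n+in(-1)^n(t-\pi)\bigr]}{(1+e^{it})^2}\,dt.$$
The bracketed factor is $O((t-\pi)^2)$ near $t=\pi$, so (since the earlier proposition showed $\frac{(t-\pi)^2 e^{it}}{(1+e^{it})^2}$ is bounded) the integrand is bounded. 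Splitting as principal-value integrals, the piece with the linear $(t-\pi)$ factor vanishes by exactly the same symmetry (substitution $\tau=2\pi-t$) that killed the integral ``$B$'' in Section~\ref{section_3}. What remains is
$$I_n:=\mathrm{PV}\int_0^{2\pi}\frac{e^{it}\bigl(e^{-int}-(-1)^n\bigr)}{(1+e^{it})^2}\,dt.$$

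I then substitute $z=e^{it}$, $dt=\frac{dz}{iz}$, which turns $I_n$ into $\frac{1}{i}\,\mathrm{PV}\oint_{|z|=1}\frac{z^{-n}-(-1)^n}{(1+z)^2}\,dz$. The crucial algebraic point is that $z^{-n}-(-1)^n$ vanishes to first order at $z=-1$, so the double pole of $(1+z)^{-2}$ there is reduced to a simple pole; the PV then exists and equals $2\pi i\,\mathrm{Res}_{z=0}+\pi i\,\mathrm{Res}_{z=-1}$. A direct computation gives $\mathrm{Res}_{z=0}=(-1)^{n-1}n$ for $n\geq 1$ (and $0$ otherwise), and $\mathrm{Res}_{z=-1}=\frac{d}{dz}z^{-n}\bigl|_{z=-1}=n(-1)^n$. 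For $n\geq 1$ the two residues combine to give $I_n=\pi n(-1)^{n-1}$ and hence $c_n(S_1)=\frac{n(-1)^{n-1}}{2}$; for $n\leq 0$ the first residue vanishes and $c_n(S_1)=\frac{n(-1)^n}{2}$. Adding $c_n(S_1)$ and $c_n(S_2)$ then yields $c_n(S)=n(-1)^{n-1}$ for $n\geq 1$ and $c_n(S)=0$ for $n\leq 0$, as required.

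The main obstacle is the contour step: one must check that the Pf subtraction $\frac{(-1)^n}{\tan(\epsilon/2)}$ built into the definition of $S_1$ is precisely what makes the PV well-defined, and equivalently that the Taylor rewriting above legitimately replaces the divergent $A$-type piece of Section~\ref{section_3} by the subtraction of $(-1)^n$ inside the numerator. Once this book-keeping is in place, the residue computation is routine.
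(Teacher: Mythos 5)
Your proof plan is correct, and it arrives at the right Fourier coefficients; I have checked the arithmetic ($c_n(S_1)=\tfrac{n(-1)^{n-1}}{2}$ for $n\geq 1$, $c_n(S_1)=\tfrac{n(-1)^n}{2}$ for $n\leq 0$, $c_n(S_2)=\tfrac{n(-1)^{n-1}}{2}$ for all $n$, hence $c_n(S)=(-1)^{n-1}n$ for $n\geq 1$ and $0$ otherwise). The route you take differs from the paper's in two ways worth noting. First, the paper keeps $S_1$ and $S_2$ together throughout and works with a single defect $\Delta=c_n-d_n$; you split off the $\delta'$-contribution immediately, which is cleaner and makes the cancellation transparent. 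Second, and more substantively, the paper never writes a principal-value integral over the full unit circle: it parametrises $\Omega_\epsilon$ as the arc $C_\epsilon$, closes it up with a short chord $L_\epsilon$ near $z=-1$, deforms the resulting closed contour to the circle of radius $1/2$ (so the pole at $z=-1$ is never on the contour), and then separately evaluates the $L_\epsilon$-integrals against the $\varphi(\pi)/\tan(\epsilon/2)$ counterterm. Your version instead pre-subtracts $(-1)^n$ in the numerator (legitimate, since it is exactly the $\mathrm{Pf}$ counterterm repackaged via the Taylor identity of Section~\ref{section_3}), so the pole at $z=-1$ becomes simple and you can invoke the half-residue rule for a simple pole lying on a smooth contour. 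Both arguments are correct; yours trades the explicit chord computation for one appeal to the standard PV-residue formula.

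Two things you should make explicit when writing this up. (i) You cannot literally set $\varphi=e^{-int}$ in the Section~\ref{section_3} formula because $e^{-int}\notin\calD(\mR)$; as the paper does, you must pass through $\varphi_\delta=\rho_\delta\cdot e^{-int}$, verify $\varphi_{\delta,\mathrm{circle}}=e^{-int}$, and justify the $\delta\searrow 0$ limit (by dominated convergence on the part of the integral away from $t=\pi$, where $\rho_\delta$ differs from $1$). You gesture at this but it is the one non-routine step. (ii) When you pass to $\mathrm{PV}\oint_{|z|=1}$, the principal value is inherited from the symmetric removal of $(\pi-\epsilon,\pi+\epsilon)$ in $t$; since arc length on the unit circle is proportional to angle, this is exactly the symmetric principal value needed for the half-residue formula, so the appeal is valid, but the compatibility is worth a sentence.
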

\begin{proof} We produce, for each $\delta>0$, a test function
  $\rho_\delta \in \calD(\mR)$ such that
$$
\rho_{\delta}\Big|_{(\delta,2\pi-\delta)}=1,
$$
and such that we obtain a partition of unity on $\mR$ by taking shifts
of $\rho_\delta$ by integer multiples of $2\pi$, that is,
$$
\sum_{n\in \mZ} \rho_\delta (t+2\pi n)=1\quad (t\in \mR).
$$
\begin{figure}[h]
     \center 
     \psfrag{d}[c][c]{${\scriptstyle \delta}$}
     \psfrag{f}[c][c]{$\varphi$}
     \psfrag{F}[c][c]{$\Phi$}
     \psfrag{p}[c][c]{${\scriptstyle 2\pi}$}
     \psfrag{r}[c][c]{$\rho_\delta$}
     \psfrag{0}[c][c]{${\scriptstyle 0}$}
     \psfrag{t}[c][c]{${\scriptstyle t}$}
     \psfrag{1}[c][c]{${\scriptstyle 1}$}
     \psfrag{s}[c][c]{${\scriptstyle 2\pi-\delta}$}
     \psfrag{m}[c][c]{${\scriptstyle -\delta}$}
     \psfrag{u}[c][c]{${\scriptstyle 2\pi+\delta}$}
     \includegraphics[width=6 cm]{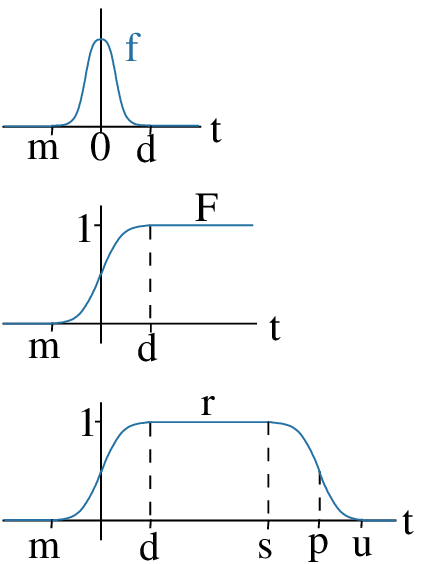}
\end{figure}

\noindent Start with a $\delta>0$ small, and consider any even,
nonnegative test function $\varphi$ with support in $[-\delta,\delta]$
and such that
$$
\int_{-\delta}^{\delta} \varphi(t)dt=1.
$$
Define the function 
$$
\Phi(t):=\int_{(-\infty,t]} \varphi(\tau) d\tau .
$$
Then it can be seen that for all $t\in \mR$, $\Phi$ satisfies
$\Phi(t)+\Phi(-t)=1$.  

\noindent The desired $\rho_\delta$ can now be defined by
$$
\rho_\delta (t)=\Phi(t)\cdot \Phi(2\pi-t).
$$
Define $\varphi_\delta \in \calD(\mR)$ by 
$$
\varphi_\delta (t)=\rho_\delta(t) e^{-int}.
$$
Then we have 
$$
\varphi_{\delta,{ \scriptscriptstyle\textrm{circle}}}(e^{it})
=\sum_{m\in \mZ} \varphi_\delta(t-2\pi m) 
=\sum_{m\in \mZ} \rho_\delta(t-2\pi m) e^{-int}
=e^{-int}.
$$
Hence 
$$
c_n(S)
=\frac{1}{2\pi}\langle S_{\scriptscriptstyle\textrm{circle}},e^{-int}\rangle
=
\frac{1}{2\pi}\langle S_{ \scriptscriptstyle\textrm{circle}} ,
\varphi_{\delta,{\scriptscriptstyle\textrm{circle}}} \rangle=
\frac{1}{2\pi}\langle S,\varphi_{\delta} \rangle.
$$
Before proceeding, we explain the idea in the calculation below: We
note that as $\delta\searrow 0$, $\varphi_\delta$ converges pointwise
to $1$ on $(0,2\pi)$, and to $0$ on $\mR\setminus [0,2\pi]$. So we
expect $\langle S,\varphi_\delta\rangle$ to essentially be the action
of $S$ on $e^{-int}$.  However, to show this, we need to work with the
two limiting processes $\delta\searrow 0$ as well as
$\epsilon\searrow 0$ (arising from the definition of $S$ involving the
distribution ${\mathrm P\mathrm f}(\cdots)$). So we will split the set
containing the support of $\varphi_\delta$ into two parts, one where
the $\epsilon$ limit is relevant, but where $\rho_\delta$ is `nice',
namely a constant equal to $1$, and the part where the $\delta$ limit
is relevant, but the distribution is `nice', namely it is a regular
distribution\footnote{Throughout the article, by a regular distribution $T_f\in \calD'(\mR)$ corresponding to a locally integrable function $f\in \textrm{L}^1_{\textrm{loc}}(\mR)$, 
we mean the distribution $\calD(\mR)\owns \varphi\mapsto \int_\mR f(t) \varphi(t) dt \in \mR$.}.

\smallskip

\noindent We partition $\displaystyle O_\epsilon
=\left(-\frac{\pi}{2},\pi-\epsilon\right)\cup \left(\pi+\epsilon,2\pi+\frac{\pi}{2}\right)$ as  
$$
O_\epsilon=\underbrace{\left(-\frac{\pi}{2},\frac{\pi}{2}\right)\cup\left(3\frac{\pi}{2},2\pi+\frac{\pi}{2}\right)}_{V}\cup 
\underbrace{\left(\frac{\pi}{2},\pi-\epsilon \right)\cup\left(\pi+\epsilon ,3\frac{\pi}{2}\right)}_{V_\epsilon}.
$$
Then we have 
\begin{eqnarray*}
&& \left\langle {\mathrm P\mathrm f} {\displaystyle \frac{e^{it}}{(1+e^{it})^2}} ,\varphi_\delta \right\rangle 
 \\
 &&\phantom{aaa}= 
 \lim_{\epsilon \searrow 0} 
\left( 
\int_{O_\epsilon} \frac{\varphi_\delta(t) e^{it}}{(1+e^{it})^2}dt -\frac{e^{-in\pi}\cdot \rho_\delta(\pi)}{\tan (\epsilon/2)}\right)\\
&&\phantom{aaa}= 
\int_{V} \frac{\varphi_\delta (t) e^{it}}{(1+e^{it})^2}dt +\lim_{\epsilon \searrow 0} \left( \int_{V_\epsilon }\frac{1\cdot e^{-int} e^{it}}{(1+e^{it})^2}dt 
-\frac{(-1)^n}{\tan (\epsilon/2)}\right)
\\
&&\phantom{aaa}= 
\lim_{\delta\searrow 0}
\int_{V} \frac{\rho_\delta (t) e^{-int}e^{it}}{(1+e^{it})^2}dt 
+\lim_{\epsilon \searrow 0} \left( \int_{V_\epsilon }\frac{e^{-int} e^{it}}{(1+e^{it})^2}dt 
-\frac{(-1)^n}{\tan (\epsilon/2)}\right)
\\
&&\phantom{aaa}= 
\int_{V} \frac{1\cdot e^{-int}e^{it}}{(1+e^{it})^2}dt 
+\lim_{\epsilon \searrow 0} \left( \int_{V_\epsilon }\frac{e^{-int} e^{it}}{(1+e^{it})^2}dt 
-\frac{(-1)^n}{\tan (\epsilon/2)}\right)
\\
&&\phantom{aaa}= 
\lim_{\epsilon \searrow 0} 
\left( 
\int_{\Omega_\epsilon} \frac{e^{-int} e^{it}}{(1+e^{it})^2}dt 
-\frac{(-1)^n}{\tan (\epsilon/2)}\right).
\end{eqnarray*}
In the above, we used the Lebesgue dominated convergence theorem in
the evaluation of the $\delta$-limit in order to get the second-last
equality.  Also, 
\begin{eqnarray*}
\left\langle i\pi \sum_{n\in \mZ} \delta'_{(2n+1)\pi},\varphi_\delta \right\rangle 
&=&
-i\pi \varphi_\delta'(\pi)
\\
&=&
-i\pi (-in)e^{-in\pi}\cdot 1
\\
&=&
-(-1)^n n\pi.\phantom{\displaystyle \left\langle i\pi \sum_{n\in \mZ}  \right\rangle }
\end{eqnarray*}
Hence 
\begin{eqnarray*}
 c_n(S)&=& \frac{1}{2\pi}\langle S,\varphi_{\delta} \rangle\phantom{\left\langle {\mathrm P\mathrm f} {\displaystyle \frac{e^{it}}{(e^{it})^2}}\right\rangle }\\
 &=&\left\langle {\mathrm P\mathrm f} {\displaystyle \frac{e^{it}}{(1+e^{it})^2}} 
 +i\pi \sum_{n\in \mZ} \delta'_{(2n+1)\pi} ,
 \varphi_\delta \right\rangle \\
 &=& 
\frac{1}{2\pi} \lim_{\epsilon \searrow 0} 
\left( 
\int_{\Omega_\epsilon} \frac{e^{-int} e^{it}}{(1+e^{it})^2}dt -\frac{(-1)^n}{\tan (\epsilon/2)}-(-1)^n n\pi\right).
\end{eqnarray*}

\medskip 

\noindent 
Suppose that $\epsilon\in (0,\pi)$ is fixed. Let $C_\epsilon$ be the path along
the circular arc $t\mapsto e^{it}$ for
$t\in [\pi+\epsilon,2\pi]\cup[0,\pi-\epsilon]$, traversed in the
anticlockwise direction. The straight line segment $L_\epsilon$ is given by 
$t\mapsto -\cos \epsilon-it$, for
$t\in [-\sin \epsilon,\sin \epsilon]$. Moreover, let $C$ be the
circular path with center $0$ and radius $1/2$, traversed once in an
anticlockwise manner.
\begin{figure}[h]
     \center 
     \psfrag{L}[c][c]{$L_\epsilon$}
     \psfrag{c}[c][c]{$C_\epsilon$}
     \psfrag{C}[c][c]{$C$}
     \includegraphics[width=3.6 cm]{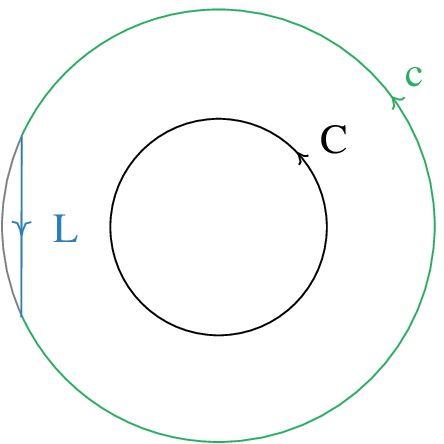}
\end{figure}

\noindent 
We have 
\begin{eqnarray*}
 \frac{1}{2\pi} \int_{\Omega_\epsilon} \frac{e^{-int}e^{it}}{(1+e^{it})^2} dt 
 &=& 
 \frac{1}{2\pi i} \int_{\Omega_\epsilon} \frac{e^{-int}}{(1+e^{it})^2} ie^{it}dt
\\
&=&
 \frac{1}{2\pi i} \int_{C_\epsilon} \frac{z^{-n}}{(1+z)^2} dz\\
 &=& 
 \frac{1}{2\pi i} \int_{C_\epsilon+L_\epsilon} \frac{z^{-n}}{(1+z)^2} dz-\frac{1}{2\pi i} \int_{L_\epsilon} \frac{z^{-n}}{(1+z)^2} dz
 \\
 &=&
 \frac{1}{2\pi i} \oint_{C} \frac{z^{-n}}{(1+z)^2} dz-\frac{1}{2\pi i} \int_{L_\epsilon} \frac{z^{-n}}{(1+z)^2} dz.
\end{eqnarray*}
But by termwise differentiating the geometric series in the disk
$|z|<1$, we obtain
$$
\frac{1}{(1+z)^2}=1-2z+3z^2-4z^3+-\cdots \quad (|z|<1),
$$
and so 
$$
\frac{z^{-n}}{(1+z)^2}=\frac{1}{z^n}(1-2z+3z^2-4z^3+-\cdots+(-1)^{n-1}nz^{n-1}+\cdots \quad (|z|<1),
$$
which yields, by the Laurent series theorem \cite[Thm.4.7,p.134]{Sas}, 
that
$$
\frac{1}{2\pi i} \oint_{C} \frac{z^{-n}}{(1+z)^2} dz
=(-1)^{n-1}n =:d_n \textrm{ if } n=1,2,3,\cdots. 
$$                 
Define 
\begin{eqnarray*}
 \Delta&:=&c_n-d_n\phantom{\displaystyle \frac{1}{2\pi}}\\
 &=&\displaystyle \frac{1}{2\pi} \lim_{\epsilon\searrow 0}
 \left( \int_{L_\epsilon} \frac{z^{-n}}{(1+z)^2}dz -\frac{(-1)^n}{\tan (\epsilon/2)}-(-1)^n n\pi\right).
\end{eqnarray*}
We have, using the fundamental theorem of contour integration, that 
\begin{eqnarray*}
i\int_{L_\epsilon} \frac{1}{(1+z)^2} dz 
&=&
-i \int_{L_\epsilon} \frac{d}{dz} \frac{1}{1+z} dz\\
&=&
-i\left(\frac{1}{1-e^{i\epsilon}}-\frac{1}{1-e^{-i\epsilon}}\right)\\
&=&i\frac{e^{i\epsilon}+1}{e^{i\epsilon}-1}=\frac{1}{\tan(\epsilon/2)}.
\end{eqnarray*}
So 
\begin{eqnarray*}
 \Delta&=& \displaystyle \frac{1}{2\pi} \lim_{\epsilon\searrow 0}
 \left( \int_{L_\epsilon} \frac{z^{-n}}{(1+z)^2}dz -\int_{L_\epsilon} \frac{(-1)^{n}}{(1+z)^2}dz-(-1)^n n\pi\right)\\
 &=& 
 \displaystyle \frac{1}{2\pi} \lim_{\epsilon\searrow 0}
 \left( \int_{L_\epsilon} \frac{z^{-n}-(-1)^n}{(1+z)^2}dz -(-1)^n n\pi\right).
\end{eqnarray*}
If $n=0$, then $\Delta=0$. If $n\neq 0$, then we have, by a Taylor
expansion of $z^{-1}$ around the point $z=-1$, that
$$
z^{-n}=(-1)^n+(-n)(-1)^{-n-1}(z+1)+h(z) 
$$ 
for $z\in L_\epsilon$,
where $h(z)=O((z+1)^2)$ as $z\rightarrow -1$. Hence by the ML
inequality,
$$
\left|\int_{L_\epsilon} \frac{z^{-n}-(-1)^n}{(1+z)^2}dz-\int_{L_\epsilon} \frac{(-n)(-1)^{-n-1}}{1+z}dz\right|\sim \epsilon\stackrel{\epsilon\rightarrow 0}{\longrightarrow} 0.
$$

\goodbreak 

\noindent 
Hence 
\begin{eqnarray*}
 \Delta\displaystyle 
 &=&\frac{1}{2\pi} \lim_{\epsilon\searrow 0}
 \left( \int_{L_\epsilon} \frac{(-n)(-1)^{-n-1}}{1+z}dz -(-1)^n n\pi\right)
 \\
 &=&
 \frac{1}{2\pi } n(-1)^n \lim_{\epsilon\searrow 0}
 \left(i \int_{L_\epsilon} \frac{1}{1+z}dz -\pi\right).
\end{eqnarray*}
But 
\begin{eqnarray*}
 \int_{L_\epsilon} \frac{1}{1+z}dz
 &=& 
 \int_{\sin \epsilon}^{-\sin \epsilon} \frac{1}{1-\cos \epsilon +it} i dt 
 \\
 &=&
 i
 \int_{\sin \epsilon}^{-\sin \epsilon} \frac{1-\cos \epsilon -it}{(1-\cos \epsilon)^2 +t^2}  dt
 \\
 &=& 
 i
 \int_{\sin \epsilon}^{-\sin \epsilon} \frac{1-\cos \epsilon}{(1-\cos \epsilon)^2 +t^2}  dt+0
 \\
 &=&
 -2i(1-\cos \epsilon)\int_0^{\sin \epsilon} \frac{1}{(1-\cos \epsilon)^2 +t^2}  dt.
 \end{eqnarray*}
 Thus 
 \begin{eqnarray*}
  \int_{L_\epsilon} \frac{1}{1+z}dz
 &=& 
 -2i(1-\cos \epsilon) \frac{1}{1-\cos \epsilon}\tan^{-1} \left(\frac{t}{1-\cos \epsilon}\right)\Big|_0^{\sin\epsilon}\\
 &=&-2i \tan^{-1}\frac{\sin\epsilon}{1-\cos \epsilon} \\
 &=&-2i \tan^{-1}\frac{\cos(\epsilon/2)}{\sin( \epsilon/2)}\\
 &=&
 -2i\left(\frac{\pi}{2}-\frac{\epsilon}{2}\right).
\end{eqnarray*}
So 
\begin{eqnarray*}
\Delta&=&\frac{1}{2\pi } n(-1)^n\left(i(-2i)\left(\frac{\pi}{2}-0\right)-\pi\right)\\
&=&\frac{1}{2\pi } n(-1)^n\left(\pi-\pi\right)\\
&=&0.\phantom{\frac{1}{2\pi}}
\end{eqnarray*}
Consequently, 
$$
{\mathrm P\mathrm f} {\displaystyle \frac{e^{it}}{(1+e^{it})^2}}
 +i\pi \displaystyle \sum_{n\in \mZ} \delta'_{(2n+1)\pi}
 =e^{it}-2e^{2it}+3e^{3it}-4e^{4it}+-\cdots.
$$
where the series on the right hand side converges in $\calD'(\mR)$. 
\end{proof}

\goodbreak

\section{The Fourier series of $S$ is summable at $t=0$}
\label{section_5}

\noindent In this section, we will show that the $2\pi$-distribution $S\in \calD'(\mR)$,  
given in Notation~\ref{19_jan_2020_distr_S} on page \pageref{19_jan_2020_distr_S}, 
 belongs to $\bsigma$, that is, it has a Fourier series which is summable at $t=0$ 
 (in the sense of Definition~\ref{def_summable}). 
 
 Let $(\varphi_m)_{m\in \mN}$ be any approximate
identity. Then we have, for all large enough $m$, that the support of
$\varphi_m$ is contained inside, say $(-\pi/2,\pi/2)$, and so, in
particular, it is far from $\pm\pi$. Thus if
$U_\epsilon:=(-\pi/2, \pi-\epsilon)$, then we have for all large $m$,
that
{\small 
\begin{eqnarray*}
\left\langle \!{\mathrm P\mathrm f} {\displaystyle \frac{e^{it}}{(1\!+\!e^{it})^2}} \!+\!i\pi \displaystyle \sum_{n\in \mZ} \!\delta'_{(2n+1)\pi},\varphi_m \!\right\rangle 
\!\!\!&\!\!\!=&\!\!\!
\lim_{\epsilon\searrow 0} \!
 \left( \int_{U_\epsilon}\! \frac{\varphi_m(t) e^{it}}{(1\!+\!e^{it})^2}dt \!-\!\frac{0}{\tan (\epsilon/2)}\!\right)\!+\!i\pi\! \cdot\! 0\\
\\
&=&
\int_{-\pi/2}^{\pi/2} \frac{\varphi_m(t) e^{it}}{(1+e^{it})^2}dt.
\end{eqnarray*}}
Hence, 
\begin{eqnarray*}
\lim_{m\rightarrow \infty} \langle S, \varphi_m\rangle 
&=&
\lim_{m\rightarrow \infty}
\left\langle {\mathrm P\mathrm f} \displaystyle \frac{e^{it}}{(1+e^{it})^2} +i\pi \sum_{n\in \mZ} \delta'_{(2n+1)\pi},\varphi_m \right\rangle 
\\
&=& 
\lim_{m\rightarrow \infty}
\int_{-\pi/2}^{\pi/2} \frac{\varphi_m(t) e^{it}}{(1+e^{it})^2}dt=\lim_{m\rightarrow \infty}
\int_{-\pi/2}^{\pi/2} \frac{\varphi_m(t) e^{it}\psi(t)}{(1+e^{it})^2}dt ,
\end{eqnarray*}
where $\psi\in \calD(\mR)$ is such that $\psi\equiv 1$ on $(-\pi/2,\pi/2)$ and has compact support contained inside $(-\pi,\pi)$. 
Thus 
\begin{eqnarray*}
\lim_{m\rightarrow \infty} \langle S, \varphi_m\rangle 
&=& 
\label{17_jan_2020_1819}
\lim_{m\rightarrow \infty}
\left\langle \varphi_m, \frac{ e^{it}\psi }{(1+e^{it})^2}\right\rangle \quad\phantom{aaaa} (\ast)\\
&=& 
\frac{1\cdot 1}{(1+1)^2}\quad\phantom{aaaaaaaaaaaaaaall} (\star) \\
&=&
\frac{1}{4}.
\end{eqnarray*}
In the first equality ($\ast$) above, we view $\varphi_m$ as the
regular distribution in $\calE'(\mR)$ corresponding to $\varphi_m$, and
$$
t\mapsto \frac{e^{it}\psi(t)}{(1+e^{it})^2 }
$$
as a test function in $\calE(\mR)$. Then, since 
$$
\lim_{m\rightarrow \infty}\varphi_m=\delta_0 
$$
in $\calD'(\mR)$, the second equality $(\star$) above follows.

\smallskip 

\noindent 
Consequently, $S\in \bsigma$, that is, the distribution 
 $
S= \displaystyle {\mathrm P\mathrm f} {\displaystyle \frac{e^{it}}{(1+e^{it})^2}} +i\pi \displaystyle \sum_{n\in \mZ} \delta'_{(2n+1)\pi}
$ 
has a Fourier series that is summable at $t=0$, and 
$$
1-2+3-4+\cdots=\sum_{n\in \mZ} c_n(S)= \frac{1}{4}.
$$

\section{Ramanujan manipulation using Fourier series}
\label{section_6}

\noindent 
Let $T_0\in \calD'(\mR)$ be the $2\pi$-periodic distribution having
the Fourier series
\begin{equation}
\label{def_distr}
T_0:= e^{it}+2e^{2it}+3e^{3it}+\cdots.
\end{equation}
The series converges in $\calD'(\mR)$. We show in this section that $T_0\in \bSigma$, that is,  
that $T_0$ has a Fourier series which is summable at $t=0$ in the sense of Definition~\ref{def_last_summable}, 
with the sum $-1/12$. 

\medskip

As $T\mapsto \bbH_\lambda T:\calD'(\mR)\rightarrow \calD'(\mR)$ is
continuous, it follows from \eqref{def_distr} that
\begin{equation}
\label{def_distr_2}
\bbH_2 T_0=e^{2it}+2e^{4it}+3e^{6it}+4e^{8it}+5e^{10it}+\cdots.
\end{equation}
Moreover,
\begin{equation}
\label{def_distr_3}
4 \bbH_2 T_0=4e^{2it}+8e^{4it}+12e^{6it}+16e^{8it}+20e^{10it}+\cdots.
\end{equation}
Subtracting \eqref{def_distr} and \eqref{def_distr_3}, we obtain 
\begin{eqnarray*}
T_0-4\bbH_2 T_0&=&e^{it}+2e^{2it}+3e^{3it}+4e^{4it}+5e^{5it}+6e^{6it}+\cdots\\
&& \phantom{e^{it}}-4e^{2it}\phantom{+3e^{6it}a}-8e^{4it}\phantom{+5e^{10it}}-12e^{6it}-\cdots \\
&=& e^{it}-2e^{2it}+3e^{3it}-4e^{4it}+5e^{5it}-6e^{6it}+-\cdots.
\end{eqnarray*}
Thus we arrive at the identity
\begin{equation}\label{equation_main}
T_0-4\bbH_2 T_0 =
{\mathrm P\mathrm f} {\displaystyle \frac{e^{it}}{(1+e^{it})^2}} +i\pi \displaystyle \sum_{n\in \mZ} \delta'_{(2n+1)\pi}=:S.
\end{equation}
But we know from the previous section that
 $\displaystyle 
\sum_{n\in \mZ} c_n(S)=\frac{1}{4}.
$

\medskip 

\noindent 
Hence it follows from Definition~\ref{def_last_summable} that 
$$
1+2+3+\cdots= \sum_{n\in \mZ} c_n(T_0)=\frac{1}{1-4}\sum_{n\in \mZ} c_n(S)=\frac{1}{-3}\cdot \frac{1}{4}=-\frac{1}{12}.
$$
Here we used the (extended) summability notion from
Definition~\ref{def_last_summable}, and so we have shown that
$T_0\in \bSigma$.  Had $T_0$ belonged to $\bsigma$, we would of course
obtain the same sum of the Fourier coefficients of $T_0$ at $t=0$.
But we now show (in Proposition~\ref{prop_17_jan_2020_1443} below) that in fact $T_0\not\in \bsigma$ (that is, the distribution $T_0$ 
has a Fourier series that is {\em not} summable at $t=0$ {\em in the restricted sense of
Definition~\ref{def_summable}}).

\begin{proposition}
\label{prop_17_jan_2020_1443}
$\displaystyle T_0:=\sum_{n=1}^\infty ne^{nit} \not\in \bsigma.
$
\end{proposition}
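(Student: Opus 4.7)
The plan is to exhibit a specific approximate identity $(\varphi_m)_{m\in\mN}$ along which $\langle T_0,\varphi_m\rangle$ tends to $+\infty$, ruling out $T_0\in\bsigma$. A purely algebraic route through the identity $T_0-4\bbH_2 T_0=S$ from \eqref{equation_main} cannot suffice here: if $T_0$ were in $\bsigma$, Propositions~\ref{prop_linearity} and \ref{prop_hom} would only force the hypothetical sum to be $-1/12$, producing no contradiction. So I aim for a direct divergence argument.

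Since $T_0=\sum_{n=1}^\infty n\,e^{int}$ converges in $\calD'(\mR)$ and each $\varphi_m=m\varphi(m\cdot)$ is Schwartz, I would compute the pairing termwise via continuity of $\langle\cdot,\varphi_m\rangle$ on $\calD'(\mR)$:
$$
\langle T_0,\varphi_m\rangle \;=\; \sum_{n=1}^\infty n\,\reallywidehat{\varphi_m}(-n) \;=\; \sum_{n=1}^\infty n\,\reallywidehat{\varphi}(n/m),
$$
using the dilation identity $\reallywidehat{\varphi_m}(\xi)=\reallywidehat{\varphi}(\xi/m)$ together with the evenness of $\widehat{\varphi}$ inherited from $\varphi$. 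The key is then to choose $\varphi$ so that $\widehat{\varphi}\ge 0$, so that all summands cooperate. Starting from any symmetric positive mollifier $\psi\in\calD(\mR)$, I would take
$$
\varphi \;=\; \frac{\psi\ast\psi}{\int_\mR(\psi\ast\psi)(t)\,dt}.
$$
Then $\varphi$ is again a symmetric positive mollifier with compact support, while $\widehat{\varphi}=(\widehat{\psi})^2/(\widehat{\psi}(0))^2\ge 0$ and $\widehat{\varphi}(0)=1$.

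The final step is to recognise a Riemann sum: writing
$$
\langle T_0,\varphi_m\rangle \;=\; m^2\cdot\frac{1}{m}\sum_{n=1}^\infty f(n/m),\qquad f(x):=x\,\widehat{\varphi}(x)\in\calS(\mR),
$$
a routine Schwartz tail estimate (split at $n\le mM$ and use $|f(x)|\lesssim (1+x^2)^{-1}$ on the tail) shows that $\tfrac{1}{m}\sum_{n\ge 1}f(n/m)$ converges as $m\to\infty$ to $A:=\int_0^\infty x\,\widehat{\varphi}(x)\,dx$. Since $\widehat{\varphi}$ is continuous with $\widehat{\varphi}(0)=1>0$ and $\widehat{\varphi}\ge 0$, one has $A>0$, so $\langle T_0,\varphi_m\rangle\sim m^2 A\to+\infty$ and no $\sigma\in\mC$ can serve as the limit, proving $T_0\notin\bsigma$. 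The main obstacle in this plan is arranging $\widehat{\varphi}\ge 0$; the autoconvolution construction resolves it cleanly, and the remaining ingredients (the termwise pairing, the scaling of the Fourier transform, and the Riemann-sum convergence for Schwartz integrands on $(0,\infty)$) are standard.
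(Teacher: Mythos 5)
Your proof is correct, but it takes a genuinely different route from the paper's. The paper exploits the closed form of $S$: it uses $T_0=-\bbT_{-\pi}S$ to write $\langle T_0,\chi\rangle$ as a singular integral against the kernel $e^{i\tau}/(1-e^{i\tau})^2$, valid for test functions with $\chi(0)=\chi'(0)=0$, and then picks the special mollifier $\varphi(t)=t^4\psi(t)/I$ (vanishing to second order at $0$, so the principal-value correction drops out), obtaining $\langle T_0,\varphi_m\rangle\sim -m^2\,\widetilde{I}/I\to-\infty$. You instead stay entirely on the Fourier side: termwise pairing (which is just the definition of convergence in $\calD'(\mR)$, since $\varphi_m\in\calD(\mR)$) gives $\langle T_0,\varphi_m\rangle=\sum_{n\geq 1} n\,\reallywidehat{\varphi}(n/m)$, the autoconvolution $\varphi=\psi\ast\psi$ (legitimately a symmetric positive mollifier, with $\reallywidehat{\varphi}=(\reallywidehat{\psi})^2\geq 0$ since $\psi$ is real and even) makes all terms nonnegative, and the Riemann-sum limit identifies the growth as $m^2\int_0^\infty x\,\reallywidehat{\varphi}(x)\,dx>0$. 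Your argument is closer in spirit to the paper's earlier proof that $\sum_{n\in\mZ}e^{int}\notin\bsigma$ (there via Poisson summation), and it buys independence from the explicit distribution $S$ and its $\mathrm{Pf}$ formula, at the cost of not illustrating that formula; the paper's choice of a mollifier vanishing at $0$ buys a clean evaluation from the singular-kernel representation. The differing signs of the divergence ($+\infty$ versus $-\infty$) are not a conflict, since the asymptotics depend on the mollifier — which is exactly consistent with $T_0\notin\bsigma$. Your opening observation is also right: the identity $T_0-4\bbH_2T_0=S$ alone cannot decide membership in $\bsigma$, so a direct divergence argument along some approximate identity is indeed required, and one such identity suffices by the definition of $\bsigma$.
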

\begin{proof} With $\bbT_{\pi}$ denoting the translation operator by $\pi$, which we note is a continuous map linear map from $\calD'(\mR)$ to itself, we have 
\begin{eqnarray*}
\bbT_{\pi} T_0&=& \bbT_{\pi}\sum_{n=1}^\infty ne^{nit}=\sum_{n=1}^\infty\bbT_{\pi} ne^{nit}\\
&=&\sum_{n=1}^\infty ne^{ni(t-\pi)}=\sum_{n=1}^\infty (-1)^n ne^{nit}
=-S,
\end{eqnarray*}
and so 
\begin{equation}
 \label{3_3_2020_18:25}
 T_0=-\bbT_{-\pi}S.
\end{equation}
Using this, we now calculate (for later use below) the action of $T_0$ on test functions of a special type.  
For any $\chi \in \calD(\mR)$ with the properties that $\textrm{supp}(\chi)\subset (-\pi/2,\pi/2)$ and $\chi(0)=\chi'(0)=0$, we have 
\begin{eqnarray*}
 \langle T_0,\chi\rangle \!&\!\!\!=&\!\!\!-\langle \bbT_{-\pi}S,\chi\rangle =-\langle S,\bbT_{\pi}\chi\rangle \phantom{\lim_{\epsilon\searrow} \int_{{\scriptstyle \pi}}}
 \\
 &\!\!\!=& \!\!\!-\lim_{\epsilon\searrow 0} \left( \int_{{\scriptstyle (0,\pi-\epsilon)\cup(\pi+\epsilon,2\pi)}}\frac{e^{it}\chi(t-\pi)}{(1+e^{it})^2} dt 
 -\frac{\chi(\pi-\pi)}{\tan(\epsilon/2)}\right)
 \\
 &\!\!\!=&\!\!\!-\lim_{\epsilon\searrow 0} \left( -\int_{{\scriptstyle (-\pi,-\epsilon)\cup(\epsilon,\pi)}}\frac{e^{i\tau}\chi(\tau)}{(1-e^{i\tau})^2} d\tau  
 -\frac{0}{\tan(\epsilon/2)}\right) 
 \\
 &\!\!\!=&\!\!\!\lim_{\epsilon\searrow 0} \int_{{\scriptstyle (-\pi,-\epsilon)\cup(\epsilon,\pi)}}\frac{e^{i\tau}\chi(\tau)}{(1-e^{i\tau})^2} d\tau 
\end{eqnarray*}
In the above, we used the substitution $\tau=t-\pi$ in order to obtain the equality in the third row.

Summarising, for $\chi \in \calD(\mR)$ with the properties that $\textrm{supp}(\chi)\subset (-\pi/2,\pi/2)$ and $\chi(0)=\chi'(0)=0$, we have 
\begin{equation}
 \label{3_3_2020_18:39}
 \langle T_0,\chi\rangle=
 \lim_{\epsilon\searrow 0} \int_{{\scriptstyle (-\pi,-\epsilon)\cup(\epsilon,\pi)}}\frac{e^{i\tau}\chi(\tau)}{(1-e^{i\tau})^2} d\tau .
\end{equation}
This fact will be used below, where $\chi$ will be replaced by the elements of an approximate identity. 

 Let $\psi$ be a nonzero test function in $\calD(\mR)$ which is symmetric and nonnegative, and has its support 
  $\textrm{supp}(\psi)\subset (-1,1)\subset (-\pi/2,\pi/2)$. Define $\varphi\in \calD(\mR)$ by 
  $$
  \varphi(t)=\frac{t^4\psi(t)}{{ \displaystyle \int_{-1}^1 t^4 \psi(t) dt}}=\frac{1}{I} \cdot t^4 \psi(t) \quad (t\in \mR),\textrm{ where }
  \displaystyle I:= \int_{-1}^1 t^4 \psi(t) dt>0.
  $$
  Then $\varphi$ is a symmetric positive mollifier with $\varphi(0)=\varphi'(0)=0$.
  Analogously, we also define $\widetilde{\varphi}\in \calD(\mR)$ by 
  $$
  \widetilde{\varphi}(t)=\frac{t^2\psi(t)}{{ \displaystyle \int_{-1}^1 t^2 \psi(t) dt}}=\frac{1}{\widetilde{I}}\cdot  t^2 \psi(t) \quad (t\in \mR),\textrm{ where }
  \displaystyle \widetilde{I}:= \int_{-1}^1 t^2 \psi(t) dt>0.
  $$
  Then $\widetilde{\varphi}$ is a symmetric positive mollifier with $\widetilde{\varphi}(0)=\widetilde{\varphi}'(0)=0$.
  
  \smallskip 
  
  \noindent 
  In \eqref{3_3_2020_18:39}, if in particular, we take $\chi=\varphi_m$, 
  then we obtain 
  \begin{eqnarray*}
   \langle T_0,\varphi_m\rangle 
   &=&
   \lim_{\epsilon\searrow 0} \int_{{\scriptstyle (-\pi,-\epsilon)\cup(\epsilon,\pi)}}\frac{e^{it}\varphi_m(t)}{(1-e^{it})^2} dt 
   \\
   &=&
   \frac{1}{I} \cdot \lim_{\epsilon\searrow 0} \int_{{\scriptstyle (-\pi,-\epsilon)\cup(\epsilon,\pi)}}\frac{e^{it}m\cdot m^4 t^4 \psi(mt)}{(1-e^{it})^2} dt 
   \\
   &=&
   m^2 \frac{\widetilde{I}}{I} \cdot \lim_{\epsilon\searrow 0} \int_{{\scriptstyle (-\pi,-\epsilon)\cup(\epsilon,\pi)}}\frac{m\cdot m^2 t^2 \psi(mt)}{\widetilde{I}} 
   \frac{t^2e^{it}}{(1-e^{it})^2} dt \\
   &=&
   m^2 \frac{\widetilde{I}}{I} \cdot\int_{-\pi}^{\pi} \widetilde{\varphi}_m(t)  \frac{t^2e^{it}\beta(t)}{(1-e^{it})^2} dt ,
   \end{eqnarray*}
   where $\beta \in \calD(\mR)$ is such that $\beta\equiv 1$ on $(-\pi,\pi)$ and has compact support contained inside $(-2\pi,2\pi)$. 
   Hence 
   \begin{eqnarray*}
   \langle T_0,\varphi_m\rangle 
   &=&
   m^2 \frac{\widetilde{I}}{I} \cdot \left\langle \widetilde{\varphi}_m, \frac{t^2e^{it}\beta(t)}{(1-e^{it})^2}\right\rangle,
  \end{eqnarray*}
  where in right hand side of the equality, we view $\widetilde{\varphi}_m$ as the
regular distribution in $\calE'(\mR)$ corresponding to $\widetilde{\varphi}_m$, and
$$
t\mapsto \frac{t^2e^{it}\beta(t)}{(1-e^{it})^2 }
$$
as a test function in $\calE(\mR)$. Then, since 
$$
\lim_{m\rightarrow \infty}\widetilde{\varphi}_m=\delta_0 
$$
in $\calD'(\mR)$, it follows that 
\begin{eqnarray*}
\lim_{m\rightarrow \infty} \left\langle \widetilde{\varphi}_m, \frac{t^2e^{it}\beta(t)}{(1-e^{it})^2}\right\rangle
&=&
\left\langle  \delta_0 , \frac{t^2e^{it}\beta(t)}{(1-e^{it})^2}\right\rangle
=\lim_{t\rightarrow 0} 
\frac{t^2e^{it}\beta(t)}{(1-e^{it})^2}
\\
&=&
\lim_{t\rightarrow 0} \frac{e^{it}\beta(t)}{\displaystyle i^2\left(\frac{1-e^{it}}{-it}\right)^2}
=
\frac{e^{i0}\cdot \beta(0)}{i^2\displaystyle \left(\frac{d}{dz} e^{z}\Big|_{z=0}\right)^2}\\
&=& \frac{1\cdot 1}{-1\cdot 1^2}=-1.
\end{eqnarray*}
Consequently, 
$$
\lim_{m\rightarrow \infty} \langle T_0,\varphi_m\rangle = \lim_{m\rightarrow \infty} m^2 \frac{\widetilde{I}}{I} \cdot (-1)=-\infty,
$$
showing that $T_0\not \in \bsigma$. 
\end{proof}

\section{Computation of $1^k+2^k+3^k+\cdots$ for $k\in \mN$} 
\label{section_7}

\noindent We will first show the following result, where $S^{(k)}$ denotes
the $k$th order derivative of the distribution $S$, where $S$ is the distribution  
$$
S:= {\mathrm P\mathrm f} {\displaystyle \frac{e^{it}}{(1+e^{it})^2}} 
+i\pi \sum_{n\in \mZ} \delta'_{(2n+1)\pi}.
$$
It is easy to see,
using the fact that the translation operator commutes with the
differentiation operator for test functions, that $S^{(k)}$ is also
$2\pi$-periodic.

\begin{proposition}
  For all $k\in \mN$, the $2\pi$-periodic distribution $S^{(k)}\in \bSigma$, that is, $S^{(k)}$ has a
  Fourier series that is summable at $t=0$ in the sense of Definition~\ref{def_last_summable}.
\end{proposition}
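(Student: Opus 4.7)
My plan is to establish the stronger fact that $S^{(k)}\in \bsigma$; the conclusion $S^{(k)}\in \bSigma$ then follows trivially by taking $\kappa=0$ in the definition of $\bSigma$ (indeed $0\neq 1$, and $S^{(k)}-0\cdot \bbH_2 S^{(k)}=S^{(k)}$). The central observation is that the singular support of $S$ is contained in $\{(2n+1)\pi:n\in \mZ\}$, so in particular $S$ is smooth in a neighbourhood of $0$, and hence so is $S^{(k)}$.

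More precisely, the distributional part $i\pi\sum_n \delta'_{(2n+1)\pi}$ is supported on $\{(2n+1)\pi\}$, and inspection of the defining formula \eqref{18_jan_2020_16_09} shows that ${\mathrm P\mathrm f}\,\frac{e^{it}}{(1+e^{it})^2}$ acts as the regular distribution $T_f$ associated to $f(t):=e^{it}/(1+e^{it})^2$ whenever paired with a test function whose support is disjoint from $\{(2n+1)\pi\}$: the counterterm $\varphi(\pi)/\tan(\epsilon/2)$ vanishes, and the integral over $\Omega_\epsilon$ reduces to the ordinary integral of $f\varphi$ over a compact set bounded away from $\pi$. Consequently, on the open set $U:=(-\pi,\pi)$, we have $S|_U=T_f|_U$, where $f$ is smooth (its poles at $(2n+1)\pi$ lie outside $U$). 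Because distributional differentiation is a local operation, this gives $S^{(k)}|_U=T_{f^{(k)}}|_U$.

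Now let $(\varphi_m)_{m\in\mN}$ be any approximate identity. For all sufficiently large $m$, $\textrm{supp}(\varphi_m)\subset U$, and integration by parts (legitimate because $\varphi_m$ has compact support in $U$ and $f$ is smooth there) yields
$$\langle S^{(k)},\varphi_m\rangle \;=\;(-1)^k\langle S,\varphi_m^{(k)}\rangle \;=\;(-1)^k\int_U f(t)\,\varphi_m^{(k)}(t)\,dt \;=\;\int_U f^{(k)}(t)\,\varphi_m(t)\,dt.$$
Since $f^{(k)}$ is continuous at $0$, applying the Proposition on regular distributions earlier in the paper (with $f^{(k)}(0+)=f^{(k)}(0-)=f^{(k)}(0)$) gives
$$\lim_{m\to\infty}\langle S^{(k)},\varphi_m\rangle \;=\; f^{(k)}(0),$$
independently of the approximate identity chosen. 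Hence $S^{(k)}\in\bsigma\subset\bSigma$.

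I do not anticipate a serious obstacle: the whole proof rests on the elementary fact that $0$ is at positive distance from the singular support of $S$, together with the locality of distributional differentiation. The only detail requiring care is verifying that ${\mathrm P\mathrm f}\,\frac{e^{it}}{(1+e^{it})^2}$ indeed restricts to the ordinary regular distribution on $U$, but this is immediate from \eqref{18_jan_2020_16_09}.
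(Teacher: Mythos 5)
Your proof is correct and follows essentially the same route as the paper's: both observe that $S$ restricts to the regular distribution $T_f$ with $f(t)=e^{it}/(1+e^{it})^2$ on a neighbourhood of $0$, use locality of distributional differentiation (via integration by parts against $\varphi_m$) to reduce $\langle S^{(k)},\varphi_m\rangle$ to $\int f^{(k)}\varphi_m$, and conclude $\lim_m\langle S^{(k)},\varphi_m\rangle = f^{(k)}(0)$. Your explicit appeal to the earlier proposition on regular distributions and your remark that $\kappa=0$ suffices for $\bSigma$-membership are just minor presentational choices; the substance is identical.
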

\begin{proof} Let $(\varphi_m)_{m\in \mN}$ be any approximate
  identity. For all large enough $m$, the support of $\varphi_m$ is
  contained inside $(-\pi/2, \pi/2)$. Then we have
\begin{eqnarray}
\nonumber 
 \lim_{m\rightarrow \infty}
 \langle S^{(k)}, \varphi_m\rangle 
 &=& 
 \lim_{m\rightarrow \infty}
(-1)^k \langle S, \varphi_m^{(k)}\rangle 
\\
\nonumber 
&=&
 \lim_{m\rightarrow \infty}
(-1)^k \int_{-\pi/2}^{\pi/2} \frac{\varphi_m^{(k)}(t) e^{it}}{(1+e^{it})^2}dt
\\
\nonumber 
&=& 
 \lim_{m\rightarrow \infty}
(-1)^k
\left\langle \varphi_m^{(k)}, \frac{ e^{it}}{(1+e^{it})^2}\right\rangle 
\\
\nonumber 
&=&
 \lim_{m\rightarrow \infty}
\left\langle \varphi_m, \left(\frac{d}{dt}\right)^k\frac{ e^{it}}{(1+e^{it})^2}\right\rangle 
\\
\nonumber 
&=& 
\left\langle \delta_0, \left(\frac{d}{dt}\right)^k\frac{ e^{it}}{(1+e^{it})^2}\right\rangle 
\\
\label{17_jan_2020_18_39}
&=& 
\left.\left(\frac{d}{dt}\right)^k\frac{ e^{it}}{(1+e^{it})^2}\right|_{t=0}.
\end{eqnarray} 
This completes the proof.
\end{proof}

\noindent We will also need the commutation relation between the
operations of $\bbH_2$ and differentiation below.

\begin{lemma}
 For any distribution $T\in \calD'(\mR)$, any $\lambda>0$, and any $k\in \mN$,  
 $$
 \left(\bbH_\lambda T\right)^{(k)} =\lambda^k \bbH_\lambda  \left(T^{(k)}\right).
 $$
\end{lemma}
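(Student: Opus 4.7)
The plan is to verify the identity directly from the two relevant definitions, namely the definition of the distributional derivative
$$\langle T^{(k)}, \varphi \rangle = (-1)^k \langle T, \varphi^{(k)} \rangle,$$
and the definition of $\bbH_\lambda$ on distributions. The cleanest route is to establish the case $k=1$ by testing both sides against an arbitrary $\varphi \in \calD(\mR)$, and then extend to general $k \in \mN$ by induction.

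For the base case $k=1$, I would unfold
$$\langle (\bbH_\lambda T)', \varphi \rangle = -\langle \bbH_\lambda T, \varphi' \rangle = -\frac{1}{\lambda} \langle T, \bbH_{1/\lambda} \varphi' \rangle,$$
and then use the smooth-function chain rule to rewrite $\bbH_{1/\lambda} \varphi'$ in terms of the derivative of $\bbH_{1/\lambda}\varphi$. Explicitly, since $(\bbH_{1/\lambda}\varphi)(t) = \varphi(t/\lambda)$, differentiating gives
$$(\bbH_{1/\lambda}\varphi)'(t) = \tfrac{1}{\lambda} \varphi'(t/\lambda) = \tfrac{1}{\lambda} (\bbH_{1/\lambda}\varphi')(t),$$
so $\bbH_{1/\lambda}\varphi' = \lambda \, (\bbH_{1/\lambda}\varphi)'$. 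Substituting back, the factor $1/\lambda$ is absorbed and one obtains $\langle T', \bbH_{1/\lambda}\varphi \rangle$, which equals $\lambda \langle \bbH_\lambda T', \varphi \rangle$ by the definition of $\bbH_\lambda$ applied to $T'$. This yields $(\bbH_\lambda T)' = \lambda \, \bbH_\lambda (T')$.

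For the inductive step, assuming the identity holds for some $k$, I apply the $k=1$ case to the distribution $T^{(k)}$, giving
$$(\bbH_\lambda T^{(k)})' = \lambda \, \bbH_\lambda (T^{(k+1)}),$$
and then combine with the inductive hypothesis $(\bbH_\lambda T)^{(k)} = \lambda^k \bbH_\lambda T^{(k)}$ to conclude
$$(\bbH_\lambda T)^{(k+1)} = \bigl((\bbH_\lambda T)^{(k)}\bigr)' = \lambda^k (\bbH_\lambda T^{(k)})' = \lambda^{k+1} \bbH_\lambda (T^{(k+1)}).$$

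There is no real obstacle here: the only thing that needs care is keeping the $\lambda$'s straight, since the definition of $\bbH_\lambda$ on distributions carries a prefactor $1/\lambda$ while the chain rule applied to $\bbH_{1/\lambda}\varphi$ produces a factor $1/\lambda$; these two effects interact but cancel in exactly the way required to leave the single factor $\lambda^k$ in the final identity.
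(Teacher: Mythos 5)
Your proof is correct and uses essentially the same idea as the paper's: test against $\varphi\in\calD(\mR)$, unfold the definitions of $\bbH_\lambda$ and the distributional derivative, and use the chain rule for $\bbH_{1/\lambda}\varphi$ to shuffle the $\lambda$'s. The only cosmetic difference is that the paper applies the $k$-fold chain-rule identity $\bbH_{1/\lambda}(\varphi^{(k)})=\lambda^k(\bbH_{1/\lambda}\varphi)^{(k)}$ in a single step, whereas you prove the $k=1$ case and then induct; the two organizations are interchangeable.
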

\begin{proof}
 For any test function $\varphi\in \calD(\mR)$, we have 
 \begin{eqnarray*}
  \left\langle \left(\bbH_\lambda T\right)^{(k)} ,\varphi\right\rangle
  &=&
  (-1)^k   \left\langle \bbH_\lambda T ,\varphi^{(k)} \right\rangle\phantom{\displaystyle \frac{(-1)^k}{\lambda}}
  \\
  &=&
  \frac{(-1)^k}{\lambda}  \left\langle  T ,\bbH_{\frac{1}{\lambda}}\left(\varphi^{(k)}\right) \right\rangle
  \\
  &=& 
  \frac{(-1)^k}{\lambda}   \left\langle  T ,\lambda^k \left(\bbH_{\frac{1}{\lambda}}\varphi\right)^{(k)} \right\rangle\phantom{\displaystyle \frac{(-1)^k}{\lambda}}
  \\
  &=&
  \frac{\lambda^k}{\lambda}  \left\langle  T^{(k)} , \bbH_{\frac{1}{\lambda}}\varphi \right\rangle
  \\
  &=& 
  \lambda^k  \left\langle  \bbH_{\lambda} \left(T^{(k)}\right) , \varphi \right\rangle\phantom{\displaystyle \frac{(-1)^k}{\lambda}}
  \\
  &=&
  \left\langle  \lambda^k\cdot  \bbH_{\lambda}  \left(T^{(k)}\right) , \varphi \right\rangle.\phantom{\displaystyle \frac{(-1)^k}{\lambda}}
 \end{eqnarray*}
Hence $\left(\bbH_\lambda T\right)^{(k)} =\lambda^k \bbH_\lambda \left(T^{(k)}\right)$.
\end{proof}

\noindent As $T_0-4\bbH_2 T_0=S$, we obtain by differentiating $k$
times that
$$
T_0^{(k)} -4 \cdot 2^k\cdot \bbH_2 (T_0^{(k)}) =S^{(k)}.
$$
As $S^{(k)} \in \bsigma$, and since $k:=4 \cdot 2^k\neq 1$, we
conclude that the Fourier series of $T_0^{(k)}$ is summable at $t=0$,
and we have
\begin{eqnarray*}
 \sum_{n\in \mZ}c_n(T_0^{(k)})
 &=& 
 \frac{1}{1-4\cdot 2^k} \sum_{n\in \mZ  } c_n(S^{(k)})
 \\
 &=& 
  \frac{1}{1-4\cdot 2^k}\cdot \left.\left(\frac{d}{dt}\right)^k\frac{ e^{it}}{(1+e^{it})^2}\right|_{t=0}.
\end{eqnarray*}
But, because of the convergence of the Fourier series of $T_0$ in
$\calD'(\mR)$, we obtain by termwise differentiation of
$$
T_0=e^{it}+2 e^{2it}+3e^{3it}+\cdots 
$$
that 
$$
T_0^{(k-1)} 
=
i^{k-1} (e^{it}+2^k e^{2it}+3^ke^{3it}+\cdots),
$$
and so we obtain 
\begin{eqnarray}
\nonumber  
1^k+2^k+3^k+\cdots&=&\frac{1}{i^{k-1}} \cdot\frac{1}{1-4\cdot 2^{k-1}}\cdot \left.\left(\frac{d}{dt}\right)^{k-1}\frac{ e^{it}}{(1+e^{it})^2}\right|_{t=0}\\
\label{2_feb_2020_1359}
&=&\frac{1}{i^{k-1}} \cdot\frac{1}{1-2^{k+1}}\cdot \left.\left(\frac{d}{dt}\right)^{k-1}\frac{ e^{it}}{(1+e^{it})^2}\right|_{t=0}.
\end{eqnarray}

%

\noindent 
We note that \eqref{2_feb_2020_1359} gives, for example, 
 \begin{eqnarray*}
 1^2+2^2+3^2+\cdots &=& \frac{1}{i(1-4\cdot 2)} \frac{d}{dt}\frac{e^{it}}{(1+e^{it})^2}\Big|_{t=0}
 \\
 &=& -\frac{ie^{it}}{7i}\left( \frac{1}{(1+e^{it})^2}-\frac{2e^{it}}{(1+e^{it})^3}\right)\Big|_{t=0}\\
 &=& -\frac{i}{7i}\left( \frac{1}{4}-\frac{2}{8}\right)=0.
 \end{eqnarray*}
 As one more example, \eqref{2_feb_2020_1359} gives
 \begin{eqnarray*}
 1^3+2^3+3^3+\cdots &=& \frac{1}{i^2(1-4\cdot 4)} \frac{d^2}{dt^2}\frac{e^{it}}{(1+e^{it})^2}\Big|_{t=0}
 \\
 &=& \frac{1}{15}\left( -\frac{e^{it}}{(1+e^{it})^2}+\frac{6(e^{it})^2}{(1+e^{it})^3}-\frac{6(e^{it})^3}{(1+e^{it})^4} \right)\Big|_{t=0}\\
 &=& \frac{1}{15}\left( \frac{1}{4}+\frac{6}{8}-\frac{6}{16}\right)=\frac{1}{120}.
 \end{eqnarray*} 
We note that these values match the values of the usual analytic
 continuation of the Riemann zeta function $\zeta(s)$ at $s=-2$ and at $s=-3$ respectively; see for example
 \cite[\S8.2]{Sto}.  Here $\zeta$ denotes the Riemann-zeta function defined by 
$$
\zeta(s) :=\sum_{n=1}^\infty n^{-s}.
$$
The series for $\zeta(s)$ converges absolutely if
$\textrm{Re}(s)>1$, but it diverges whenever $\textrm{Re}(s)\leq 1$.
However, one can use an analytic continuation of the zeta function  
 in the punctured plane $\mC\setminus \{1\}$ for determining the zeta function
at points where the series fails to converge. The analytic continuation satisfies a functional equation (see for example \cite[Theorem~2.1]{Tit}), 
which yields  for $k\in \mN$ that 
\begin{equation}
\label{2_feb_2020_14_01}
\zeta(-k)=\frac{2}{(2\pi)^{k+1}} \sin \left(-k \frac{\pi}{2}\right)k! \zeta(1+k);
\end{equation}
We justify in the next two results below that our formula 
 \eqref{2_feb_2020_1359} coincides with $\zeta(-k)$ for all $k\in \mN$.
  Proposition~\ref{prop_arne} is due to Arne Meurman (personal communication). 

\begin{proposition}
\label{prop_arne}
For all odd $k\in \mN$, 
$$
\zeta(-k)=\frac{1}{i^{k-1}} \frac{1}{1-2^{k+1}} \left. \left( \frac{d}{dz}\right)^{k-1} \frac{e^{iz}}{(1+e^{iz})^2} \right|_{z=0}.
$$
\end{proposition}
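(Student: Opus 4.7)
The plan is to evaluate both sides of the asserted identity as explicit closed-form expressions involving the Bernoulli number $B_{k+1}$ and then observe they coincide.

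First I would rewrite the function appearing on the right-hand side in a form more amenable to differentiation. Using
$(1+e^{iz})^{2} = 4e^{iz}\cos^{2}(z/2)$, one obtains the identity
$$
g(z)\;:=\;\frac{e^{iz}}{(1+e^{iz})^{2}}\;=\;\frac{1}{4\cos^{2}(z/2)}\;=\;\frac{1}{2}\frac{d}{dz}\tan\!\left(\frac{z}{2}\right),
$$
so that $(d/dz)^{k-1}g(z) = \tfrac{1}{2^{k+1}}\tan^{(k)}(z/2)$. Evaluating at $z=0$ thus reduces the problem to computing $\tan^{(k)}(0)$ for odd $k$.

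Next I would invoke the classical Bernoulli-number expansion
$$
\tan x \;=\; \sum_{n=1}^{\infty} \frac{(-1)^{n-1}\,2^{2n}(2^{2n}-1)B_{2n}}{(2n)!}\,x^{2n-1},
$$
which, for $k=2n-1$, gives $\tan^{(k)}(0) = \tfrac{(-1)^{n-1} 2^{2n}(2^{2n}-1)B_{2n}}{2n}$. Using $2n=k+1$ this produces
$$
\left.\left(\frac{d}{dz}\right)^{k-1}\!\frac{e^{iz}}{(1+e^{iz})^{2}}\right|_{z=0}
\;=\;
\frac{(-1)^{n-1}(2^{k+1}-1)B_{k+1}}{k+1}.
$$
Plugging this into the right-hand side of the proposition, observing that $i^{k-1}=(-1)^{n-1}$ for $k=2n-1$, and noting $(2^{k+1}-1)=-(1-2^{k+1})$, all the messy factors cancel neatly to yield $-B_{k+1}/(k+1)$.

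To close the loop I would show this matches $\zeta(-k)$. Here I use the functional equation \eqref{2_feb_2020_14_01} together with Euler's evaluation
$\zeta(2n) = \tfrac{(-1)^{n+1}(2\pi)^{2n}B_{2n}}{2(2n)!}$. For odd $k=2n-1$ we have $\sin(-k\pi/2)=(-1)^{n}$, and substituting these into \eqref{2_feb_2020_14_01} collapses everything to $\zeta(-k) = -B_{k+1}/(k+1)$, which agrees with what we obtained above. The main obstacle is nothing deeper than disciplined bookkeeping of the three competing sign sources ($i^{k-1}$, the sign from $(2^{k+1}-1)$ versus $(1-2^{k+1})$, and the alternating sign $(-1)^{n-1}$ coming from the $\tan$ series); the cancellations are tight but completely mechanical once the reduction $g(z)=\tfrac{1}{2}(d/dz)\tan(z/2)$ is in hand.
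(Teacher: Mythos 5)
Your proof is correct, and it takes a genuinely different route from the paper. The paper's proof considers the auxiliary function $f(z)=\dfrac{1}{z^{k+1}(1+e^{iz})}$, integrates it over a growing square contour $\Gamma_N$, and applies the residue theorem; summing the residues at the odd-$\pi$ poles produces $\bigl(1-2^{-(k+1)}\bigr)\zeta(k+1)$ while the residue at $z=0$ produces the $(k-1)$-fold derivative of $e^{iz}/(1+e^{iz})^2$, giving a direct link between the two without ever invoking Bernoulli numbers; the functional equation then converts $\zeta(k+1)$ to $\zeta(-k)$. You instead reduce the right-hand side to explicit closed form via the observation $e^{iz}/(1+e^{iz})^2 = \tfrac14\sec^2(z/2)=\tfrac12\,\dfrac{d}{dz}\tan(z/2)$ and the classical Bernoulli-number expansion of $\tan$, obtaining $-B_{k+1}/(k+1)$; you then show the left-hand side equals the same thing by combining the functional equation with Euler's formula $\zeta(2n)=\tfrac{(-1)^{n+1}(2\pi)^{2n}B_{2n}}{2(2n)!}$. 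The trade-off: your argument is more elementary in that it uses only standard series identities and no contour integration, but it imports Euler's evaluation of $\zeta$ at even positive integers as an external fact, whereas the paper's residue computation in effect re-derives that relation internally. Both approaches ultimately rely on the functional equation for $\zeta$, so the real point of divergence is whether one expresses the derivative via residues (paper) or via the tangent series (you). Your bookkeeping of the three sign sources, $i^{k-1}=(-1)^{n-1}$, $(2^{k+1}-1)=-(1-2^{k+1})$, and the $(-1)^{n-1}$ from the $\tan$ series, is all correct.
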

\begin{proof}  Let $k$ be a positive odd integer and set 
$$
f(z)=\frac{1}{z^{k+1} (1+e^{iz})} .
$$
We shall consider 
$\displaystyle 
\int_{\Gamma_N} f(z) dz, 
$ 
where $\Gamma_N$ denotes the contour (shown below) 
$$
[-N\pi(1+i),-N\pi i+N\pi, N\pi i +N\pi, N\pi i-N\pi, -N\pi(1+i)], 
$$
and $N$ is a positive even integer that shall approach $\infty$. 
\begin{figure}[h]
     \center 
     \psfrag{G}[c][c]{${\scriptstyle \Gamma_N}$}
     \includegraphics[width=4.8 cm]{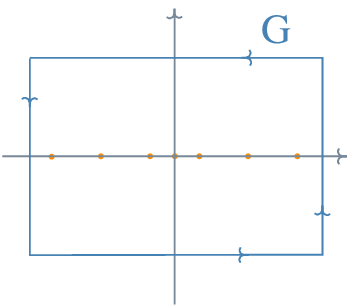}
\end{figure}

\noindent Estimating, one find that $|f(z)|=O(N^{-k-1})$ on $\Gamma_N$, so that 
$$
\lim_{N\rightarrow \infty} \int_{\Gamma_N}f(z) dz=0.
$$
As $f$ has poles at $z=0$ and $z=(2j+1)\pi$, $j\in \mZ$, the residue theorem gives 
\begin{equation}
\label{1_feb_2020_16_29}
0=\textrm{Res}_{z=0} f(z)+\sum_{j\in \mZ} \textrm{Res}_{z=(2j+1)\pi} f(z).
\end{equation}
One obtains 
$$
\textrm{Res}_{z=(2j+1) \pi} f(z)=\frac{1}{[(2j+1)\pi]^{k+1} (-i)} ,
$$
so that 
\begin{eqnarray*}
 \sum_{j\in \mZ} \textrm{Res}_{z=(2j+1) \pi} f(z)
 &=&\frac{2i}{\pi^{k+1}}\sum_{j=0}^\infty  \frac{1}{(2j+1)^{k+1}}
 \\
 &=& 
 \frac{2i}{\pi^{k+1}}\sum_{j=0}^\infty \left( \sum_{n=1}^\infty \frac{1}{n^{k+1}}-\sum_{n=1}^\infty \frac{1}{(2n)^{k+1}}\right)
 \\
 &=&
 \frac{2i}{\pi^{k+1}} \left(1-\frac{1}{2^{k+1}}\right) \zeta(k+1).
\end{eqnarray*}
Moreover, 
\begin{eqnarray*}
 -\textrm{Res}_{z=0} f(z)&=&\left. -\frac{1}{k!} \left(\frac{d}{dz}\right)^k\frac{1}{1+e^{iz}}\right|_{z=0}
 \\
 &=& \left. -\frac{1}{k!} \left(\frac{d}{dz}\right)^{k-1}\frac{(-i) e^{iz}}{(1+e^{iz})^2}\right|_{z=0}
 \\
 &=& \left. \frac{i}{k!} \left(\frac{d}{dz}\right)^{k-1}\frac{ e^{iz}}{(1+e^{iz})^2}\right|_{z=0}.
\end{eqnarray*}
Thus \eqref{1_feb_2020_16_29} gives 
\begin{equation}
\label{1_feb_2020_16_41}
\frac{2i}{\pi^{k+1}} \left(1-\frac{1}{2^{k+1}}\right) \zeta(k+1)=
\left. \frac{i}{k!} \left(\frac{d}{dz}\right)^{k-1}\frac{ e^{iz}}{(1+e^{iz})^2}\right|_{z=0}.
\end{equation}
Recall that the functional equation for $\zeta$ gives 
\begin{equation}
\label{1_feb_2020_16_54}
\zeta(-k)=\frac{2(-1)^{\frac{k+1}{2}} k!}{(2\pi)^{k+1}}\zeta(k+1).
\end{equation}
Comparing \eqref{1_feb_2020_16_41} and \eqref{1_feb_2020_16_54} gives the formula 
$$
\zeta(-k)=\frac{1}{i^{k-1}} \frac{1}{1-2^{k+1}} \left. \left( \frac{d}{dz}\right)^{k-1} \frac{e^{iz}}{(1+e^{iz})^2} \right|_{z=0}.
$$
This completes the proof.
\end{proof}

\begin{proposition}
\label{prop_arne_2}
For all even $k\in \mN$, 
$$
\zeta(-k)=0=\frac{1}{i^{k-1}} \frac{1}{1-2^{k+1}} \left. \left( \frac{d}{dz}\right)^{k-1} \frac{e^{iz}}{(1+e^{iz})^2} \right|_{z=0}.
$$
\end{proposition}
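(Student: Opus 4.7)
The proof proposal has two parts: the equality $\zeta(-k)=0$ for even $k$, and the vanishing of the derivative expression on the right.

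For the first equality, I would simply invoke the functional equation \eqref{2_feb_2020_14_01} established earlier, namely
$$\zeta(-k)=\frac{2}{(2\pi)^{k+1}}\sin\!\left(-k\frac{\pi}{2}\right)k!\,\zeta(1+k).$$
When $k$ is even, $\sin(-k\pi/2)=0$, so $\zeta(-k)=0$ immediately. This is essentially the standard fact that $-2,-4,-6,\ldots$ are the trivial zeros of the Riemann zeta function.

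For the second equality, the plan is to exploit a symmetry of the generating function. Set
$$F(w)=\frac{e^w}{(1+e^w)^2}.$$
The key observation I would verify first is that $F$ is an even function of $w$. Indeed, multiplying numerator and denominator of $F(-w)=e^{-w}/(1+e^{-w})^2$ by $e^{2w}$ gives
$$F(-w)=\frac{e^{-w}\cdot e^{2w}}{((1+e^{-w})e^w)^2}=\frac{e^w}{(e^w+1)^2}=F(w).$$
Consequently the function $z\mapsto F(iz)=e^{iz}/(1+e^{iz})^2$ satisfies $F(i(-z))=F(-iz)=F(iz)$, i.e., it is an even function of $z$. Even functions have vanishing odd-order derivatives at $0$; since $k$ even means $k-1$ is odd, we obtain
$$\left.\left(\frac{d}{dz}\right)^{k-1}\frac{e^{iz}}{(1+e^{iz})^2}\right|_{z=0}=0,$$
and multiplying by the nonzero prefactor $i^{-(k-1)}(1-2^{k+1})^{-1}$ still gives $0$. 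This matches $\zeta(-k)=0$ and completes the proof.

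There is essentially no obstacle here — the only substantive step is the evenness of $F$, which is a direct algebraic check, and the rest is bookkeeping. The proof is short, so I would present it in two lines: first the functional equation killing $\zeta(-k)$, then the symmetry $F(-w)=F(w)$ killing the $(k-1)$st derivative at $z=0$ whenever $k-1$ is odd.
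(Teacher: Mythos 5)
Your proof is correct, and it uses the same underlying idea as the paper — exploiting a parity symmetry to kill the $(k-1)$st derivative — but your implementation is more direct. The paper works with $g(z)=1/(1+e^z)$, rewrites it as $\tfrac12-\tfrac12\tanh(z/2)$, notes that $\tanh(z/2)$ is odd, and concludes that $g^{(k)}(0)=0$ for even $k\geq 2$; it then chains back to the original expression via $h=g(i\cdot)$ and a derivative bookkeeping step. You instead verify directly that $F(w)=e^{w}/(1+e^{w})^{2}$ satisfies $F(-w)=F(w)$, so $z\mapsto F(iz)$ is even and its odd-order derivatives vanish at $0$. Both arguments rest on the same symmetry (indeed $F=-g'$, so the evenness of $F$ and the oddness of $g-\tfrac12$ are equivalent facts), but your version avoids the detour through $\tanh$ and the extra chain-rule computation, making it the cleaner of the two.
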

\begin{proof} For even $k\in \mN$, since $\sin\left(-k \frac{\pi}{2}\right)=0$, it follows that $\zeta(-k)=0$.
We will show that in our formula \eqref{2_feb_2020_1359}, also 
$$
\left.\left(\frac{d}{dt}\right)^{k-1}\frac{ e^{it}}{(1+e^{it})^2}\right|_{t=0}=0
$$
for even $k$, so that \eqref{2_feb_2020_1359} matches $\zeta(-k)=0$. Set 
$$
g(z)=\frac{1}{1+e^z}.
$$
Then $g$ is holomorphic in a neighbourhood of $0$. So $h:=g(i\cdot)$ is also holomorphic in a neighbourhood of $0$, and by a 
repeated application of the chain rule, 
$$
\left.\left(\frac{d}{dz}\right)^k h\right|_{z=0}
=
i^k \left.\left(\frac{d}{dz}\right)^k g\right|_{z=i0=0}.
$$
But 
\begin{eqnarray*}
\left.\left(\frac{d}{dz}\right)^k h\right|_{z=0}
&=&
\left.\left(\frac{d}{dz}\right)^{k-1}\frac{d}{dz} \frac{1}{1+e^{iz}} h\right|_{z=0}
=
\left.\left(\frac{d}{dz}\right)^{k-1} \frac{-ie^{iz}}{(1+e^{iz})^2 }\right|_{z=0} 
\\
&=&
-i \left.\left(\frac{d}{dz}\right)^{k-1}\frac{e^{iz}}{(1+e^{iz})^2 }\right|_{z=0} ,
\end{eqnarray*}
and so 
\begin{equation}
 \label{2_feb_2020_14_24} 
 \left.\left(\frac{d}{dz}\right)^{k-1}\frac{e^{iz}}{(1+e^{iz})^2 }\right|_{z=0} 
 =
 i\left.\left(\frac{d}{dz}\right)^k h\right|_{z=0}
 =
 i i^k \left.\left(\frac{d}{dz}\right)^k g\right|_{z=0}.
\end{equation}
We show that the right most expression in \eqref{2_feb_2020_14_24} is $0$. We have 
$$
g(z)= \frac{1}{1+e^z}= \frac{1}{2}-\frac{1}{2} \frac{e^{z/2}-e^{-z/2}}{e^{z/2}+e^{-z/2}}=\frac{1}{2}-\frac{1}{2}\tanh\frac{z}{2}.
$$
For $k\geq 2$, 
$$
\left(\frac{d}{dz}\right)^k g=\left(\frac{d}{dz}\right)^k \left(\frac{1}{2}-\frac{1}{2}\tanh\frac{z}{2}\right)
=0-\frac{1}{2}\left(\frac{d}{dz}\right)^k  \tanh\frac{z}{2}.
$$
But as $z\mapsto \tanh \frac{z}{2}$ is an odd function, it follows that for even $k$, 
$$
z\mapsto \left(\frac{d}{dz}\right)^k  \tanh\frac{z}{2}
$$ 
is an odd function too, and 
in particular, it vanishes at $z=0$. Hence $\left.\left(\frac{d}{dz}\right)^k g\right|_{z=0}=0$, and using \eqref{2_feb_2020_14_24}, also 
$$
\left.\left(\frac{d}{dt}\right)^{k-1}\frac{ e^{it}}{(1+e^{it})^2}\right|_{t=0}=0.
$$
This completes the proof. 
\end{proof}

\section{Appendix A: Casimir effect}
 
\noindent In quantum field theory, upon quantising a classical field,
one ends up with infinitely many harmonic oscillators, one at each
spacetime point.  If we take this picture seriously, then we run into
the problem of having to add up all of their ground state energies,
and taking that as the ground state energy of the quantum field. The
Casimir effect, predicted in 1948 \cite{Cas}, allows the experimental
demonstration \cite{Ede} of the existence of this ground state energy obtained by
summing the ground state energies of all the oscillators.
\begin{figure}[h]
     \center 
     \psfrag{0}[c][c]{${\scriptstyle 0}$}
     \psfrag{d}[c][c]{${\scriptstyle d}$}
     \psfrag{x}[c][c]{${\scriptstyle x}$}
     \includegraphics[width=3 cm]{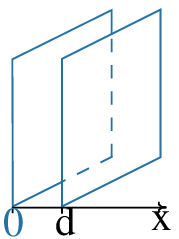}
\end{figure}
 
\noindent
Consider two parallel uncharged large plane conductors of area $A$
separated by a small distance $d$ in empty space, as shown.  This is
like a capacitor, but the plates do not have any charge on them. From
the classical point of view, there should not be any electromagnetic
force between them. Nevertheless, it can be demonstrated
experimentally that the two plates attract each other. This can be
explained as follows. In the absence of the plates, the quantum
electromagnetic field (a superposition of an infinite number of
oscillators) is in its ground state. But upon the introduction of the
two plates, we impose perfect conductor boundary conditions for the
electromagnetic field exactly at $x=0$ and $x=d$. So the ground state
energy for the oscillators will now be different (but still divergent)
from the original ground state energy.  The difference between the
ground state energies, calculated with and without the plates, will
involve subtracting one infinity from another, but seems to be
formally finite as we will see in the toy example below, and can be
used to predict the correct magnitude of the Casimir force, which has
been experimentally verified  \cite{Ede}.
 
As a simplified toy model, consider a massless scalar field $\varphi$
with only one space dimension, and suppose that the field is
constrained to vanish at $x=0$ and $x=d$. The allowed oscillator modes
$$
\varphi_n(x)=c_n \sin \frac{n\pi x}{d}, \quad n=0,1,2,3,\cdots
$$
have wave numbers given by
$$
k_n=\frac{n\pi}{d}.
$$

\noindent 
The corresponding travelling wave is then
$$
\varphi_n(x,t)= c_n \sin (k_n x-\omega_n t),\quad x,t\in \mR,
$$
with the speed of propagation 
$$
c=\frac{\omega_n}{k_n}.
$$
The associated ground state energy with a quantum harmonic oscillator
with angular frequency $\omega$ is
$$
E=\frac{\hbar \omega}{2}.
$$
Thus we now obtain that the total ground state energy in the
superposition of all possible modes is
\begin{eqnarray*}
E(d)&=&\sum_{n=1}^\infty \frac{\hbar \omega_n}{2}
\\
&=&\frac{\pi \hbar c}{2d}\sum_{n=1}^\infty n\\
&=&-\frac{\pi c \hbar}{24 d}.
\end{eqnarray*}
If we imagine two point particles at $x=0$ and at $x=d$ (analogous to
the two conducting plates), then this energy leads to an attractive
force of magnitude
$$
E'(d)= \frac{\pi c \hbar}{24 d^2}.
$$
 
\section{Appendix B: Zeta function regularisation} 
 
\noindent In order for easy comparison and contrast of our summation method versus 
the zeta regularisation method, both used in showing
\begin{equation}
\label{eq_17_01_2020_14_50}
\sum_{n=1}^\infty n =-\frac{1}{12},
\end{equation}
we outline briefly here the idea behind the zeta function regularisation method.  

\smallskip 

\noindent 
One first considers the
Riemann zeta function, given by
$$
\zeta(s) :=\sum_{n=1}^\infty n^{-s}.
$$
So we would like to set $s=-1$, in order to get the desired sum
$$
1+2+3+\cdots.
$$
The series for $\zeta(s)$ converges absolutely if
$\textrm{Re}(s)>1$, but it diverges whenever $\textrm{Re}(s)\leq 1$.
However, one can use an analytic continuation of the zeta function  
 in the
punctured plane $\mC\setminus \{1\}$ for determining the zeta function
at points where the series fails to converge. In \cite{Son}, it was
shown that a series derived using Euler's transformation provides the
analytic continuation of $\zeta$ for all complex numbers $s \neq 1$, and in particular at
negative integers, the series becomes a finite sum, whose value is given
by an explicit formula for Bernoulli numbers. In particular, this formula then
yields $\zeta(-1)=-1/12$.

\noindent Our summation method yields 
\begin{equation}
\label{17_jan_2020_1937}
1^k+2^k+3^k+\cdots= 
\frac{1}{i^{k-1}} \cdot\frac{1}{1- 2^{k+1}}\cdot \left.\left(\frac{d}{dt}\right)^{k-1}\frac{ e^{it}}{(1+e^{it})^2}\right|_{t=0}.
\end{equation}
We had seen in Proposition~\ref{prop_arne} of Section~\ref{section_7}, our formula \eqref{17_jan_2020_1937} gives values matching exactly the
corresponding values of $\zeta(-k)$. 

However, our route to arriving at $1^k+2^k+3^k+\cdots$ 
is quite different from the analytic continuation of the zeta
function, since we rely on distribution theory. The formula
\eqref{17_jan_2020_1937} we obtained is afforded in particular by
($\ast$), ($\star$) on page \pageref{17_jan_2020_1819}, and
\eqref{17_jan_2020_18_39} on page \pageref{17_jan_2020_18_39}, where
the numbers
$$
\left(\frac{d}{dt}\right)^{k} \frac{e^{it}}{(1+e^{it})^2}\Big|_{t=0} 
$$
appear. 
The Fourier series appears in quantum field theory computations, and hence
it is conceivable that our summation method is more natural in this
context. We refer the reader to \cite{KNW}, where the Casimir effect is 
analysed using the framework of quantum field theory, and in particular to \cite[\S III]{KNW}, 
where Fourier series plays a role.

 \medskip 
 
 \goodbreak 
 
\noindent {\bf Acknowledgement:} Useful discussions with Sara
Maad-Sasane, are gratefully acknowledged. I thank Arne Meurman for the proof of Proposition~\ref{prop_arne}. Thanks are also due to the
anonymous referee for the many insightful suggestions which greatly
improved the article. In particular, the query raised of
whether $T_0 \in \bsigma$, resulted in 
Proposition~\ref{prop_17_jan_2020_1443}.

\end{document}